\documentclass[12pt]{article}
\usepackage[margin=1in]{geometry}
\usepackage{amsmath,amsthm,amssymb}
\usepackage{bbm}
\usepackage{color,colortbl}
\usepackage{subcaption}
\usepackage{multirow,arydshln}
\allowdisplaybreaks

\usepackage{setspace}

\usepackage{tikz}
\usetikzlibrary{arrows,automata,positioning}
\usepackage{pgfplots}
\pgfplotsset{compat=newest}

\newtheorem{lemma}{Lemma}[section]

\newtheorem{theorem}[lemma]{Theorem}
\newtheorem{corollary}[lemma]{Corollary}
\newtheorem{definition}[lemma]{Definition}
\newtheorem{example1}[lemma]{Example}
\newtheorem{rem1}[lemma]{Remark}

\newtheorem{alg1}[lemma]{Algorithm}
\newtheorem{me1}[lemma]{Mechanism}
\newenvironment{remark}{\begin{rem1}\rm}{\end{rem1}}
\newenvironment{example}{\begin{example1}\rm}{\end{example1}}

\newcommand{\bbr}{\mathbb{R}}

\newcommand{\bbx}{\mathbb{X}}

\newcommand{\xcal}{\mathcal{X}}
\newcommand{\ycal}{\mathcal{Y}}
\newcommand{\zcal}{\mathcal{Z}}
\newcommand{\T}{\top}

\newcommand{\vmin}{\min}

\newcommand{\NE}{\operatorname{NE}}

\newcommand{\cl}{\operatorname{cl}}
\newcommand{\co}{\operatorname{co}}
\renewcommand{\int}{\operatorname{int}}

\DeclareMathOperator*{\argmax}{arg\,max}
\DeclareMathOperator*{\argmin}{arg\,min}

\begin{document}

\title{Characterizing and Computing the Set of Nash Equilibria via Vector Optimization}
\author{Zachary Feinstein \thanks{Stevens Institute of Technology, School of Business, Hoboken, NJ 07030, USA, zfeinste@stevens.edu.} \and Birgit Rudloff \thanks{Vienna University of Economics and Business, Institute for Statistics and Mathematics, Vienna A-1020, AUT, brudloff@wu.ac.at.}}
\date{\today~(Original: September 30, 2021)}
\maketitle
\abstract{
Nash equilibria and Pareto optimality are two distinct concepts when dealing with multiple criteria. It is well known that the two concepts do not coincide. However, in this work we show that it is possible to characterize the set of all Nash equilibria for any non-cooperative game as the Pareto optimal solutions of a certain vector optimization problem. To accomplish this task, we increase the dimensionality of the objective function and formulate a non-convex ordering cone under which Nash equilibria are Pareto efficient. We demonstrate these results, first, for shared constraint games in which a joint constraint is applied to all players in a non-cooperative game. In doing so, we directly relate our proposed Pareto optimal solutions to the best response functions of each player. These results are then extended to generalized Nash games, where, in addition to providing an extension of the above characterization, we deduce two vector optimization problems providing necessary and sufficient conditions, respectively, for generalized Nash equilibria. Finally, we show that all prior results hold for vector-valued games as well. Multiple numerical examples are given and demonstrate that our proposed vector optimization formulation readily finds the set of all Nash equilibria. 
}

\section{Introduction}\label{sec:intro}

The Nash equilibrium is a fundamental concept in game theory.  Proposed by John Nash in his seminal papers~(\cite{nash1950,nash1951}), the Nash equilibrium notion provides a stable strategy set in a non-cooperative game setting.  Specifically, these solutions are such that no player can unilaterally reduce her costs when all other players fix their own strategies.  Though an equilibrium of a non-cooperative game was first introduced as the solution to a special linear program by John von Neumann~(\cite{neumann1928theorie}) in a two-player zero-sum setting, a general $N$ player game is solved via fixed point arguments. Even for games with convex costs and constraints, fixed point arguments are required as demonstrated in~\cite{rosen65}; there always exists a Nash equilibrium for a convex game, but stronger convexity conditions are required to guarantee uniqueness.  
In this work, we are not concerned with the existence or uniqueness of Nash equilibria. Instead, our aim is to fully characterize, and thus enable the computation of, the set of all Nash equilibria.  As such, these results are independent of the set of equilibria being a singleton, containing several or even infinitely many elements, or being an empty set.

A Nash game is defined by a collection of optimization problems.  Mathematically this is encoded by cost functions and constraint sets for each player. Though it is tempting to consider the vector optimization problem constructed from the $N$ player non-cooperative game in which all players simultaneously attempt to minimize their costs, 
the set of minimizers of this vector optimization problem generally does \emph{not} coincide with the set of the Nash equilibria for the game.
Our primary motivation within this work is to construct a particular vector optimization problem that provides exactly the set of Nash equilibria as its minimizers.  This allows us to think about Nash equilibria as just a special type of Pareto efficiency. 

Importantly, where the appropriate vector optimization algorithms exist, this new optimization construction of the set of Nash equilibria allows for a novel computational technique that provides \emph{all} Nash equilibria. This proposed technique does not require any fixed point iterations.
For example, algorithms -- like those proposed in \cite{Armand93,VanTu17,tohidi2018adjacency} -- can be used to solve the corresponding linear vector optimization problems appearing in the Pareto characterization of the set of Nash equilibria for games with linear objectives and constraints.
For convex and non-convex games, the corresponding vector optimization problems can in practice only be solved approximately. The corresponding relation between epsilon Pareto optimal solutions and epsilon Nash equilibria and the development of numerical algorithms is ongoing work, for the convex case see~\cite{HFR22}. The present paper will therefore focus on the equivalent characterization holding for all non-coorporative games and illustrates the theoretical results both on games that can be solved analytically and on linear games, where existing vector optimization algorithm can be applied to compute the set of Nash equilibria.

The problem of finding a single Nash equilibrium has been well-studied for (nearly) all classes of games; for instance,~\cite{rosen65} provides an efficient algorithm to compute the unique equilibrium for a class of convex games.  However, for games with multiple equilibria, understanding the entire set of Nash equilibria is vitally important to fully characterize the possible (jointly) rational behaviors of the players in the game.  Despite the importance of finding all Nash equilibria in practice, this problem has received less attention and results are so far limited to specific classes of games.  There have been numerous studies on computing all Nash equilibria for finite games; we refer the interested reader to e.g.,~\cite{mckelvey1996computation,parsopoulos2004computation,herings2005globally,wu2015new}, references therein, and citations thereof.  We also wish to highlight~\cite{judd2012finding} which utilizes algebraic geometry to find (a superset of) all Nash equilibria in continuous games without binding constraints, i.e., where the equilibria lie within the interior of the feasible region.  Generalized Nash games have additionally been approached as quasi-variational inequalities~(\cite{harker1991}); such an approach allows for the computation of (a superset of) all Nash equilibria by considering an infinite number of variational inequalities in a shared constraint setting~(\cite{nabetani2011parametrized}).  In contrast to these aforementioned approaches, our methodology merely relies on the appropriate vector optimization algorithms to compute all Nash equilibria.  In fact, our approach can be readily extended to vector games (\cite{Corley85,bade2005}) in which the computation of Nash equilibria is often considered a challenge because of the large number of solutions and for which, as far as the authors are aware, there do no exist any algorithms to compute all equilibria for general classes of problems.  To highlight the novelty of our approach, we provide numerical examples in which we compute all Nash equilibria in settings where these prior cited approaches \emph{cannot} be applied.

The primary contributions of this work are three-fold. First, we provide 
an equivalent characterization of the set of all Nash equilibria of a game as Pareto optimal points of a certain vector optimization problem. This holds for all non-cooperative games without any further assumptions. 
We highlight in Remark~\ref{Rem:BestResp} a relation between the solutions of the specified vector optimization problem and the best response functions of all players; this connects our results to the well-known simple characterization of the set of Nash equilibria as the intersection of the graphs of the best response functions for all players (see e.g.\ the introduction section in~\cite{BeckStein22}).
We will illustrate this characterization on a simple game with non-convex cost functions.
Second, for generalized Nash games, we construct two bounding vector optimization problems that provide, respectively, necessary and sufficient conditions for the Nash equilibria  and correspond to specific shared constraint games considering the union or intersection of the players' constraints, respectively.
Furthermore, and as a third contribution, this vector optimization reformulation of the Nash game permits us to consider optimization-based methods for finding \emph{all} Nash equilibria. 
This is in contrast to the typical approaches that construct just a single Nash equilibrium as well as the aforementioned works on the computation of all Nash equilibria for specific classes of games.
This extends well beyond finite games to provide the theoretical foundation for a numerical technique to find all Nash equilibria for, e.g., convex games or  even for vector-valued games.

The organization of this paper is as follows.  
In Section~\ref{sec:shared}, the set of Nash equilibria for a Nash game with a shared constraint are completely characterized as the Pareto solutions to an appropriate vector optimization problem with a \emph{non-convex} ordering cone.  Due to the special structure of this cone, we present a decomposition that allows for its tractable use.  We extend this result in Section~\ref{sec:generalized} to study the use of the proposed vector optimization problems for generalized Nash games. In such a setting, we first construct a characterization of the set of Nash equilibria.  Second, we use our vector optimization framework to construct bounds around the set of Nash equilibria. Specifically, we find that one proposed vector optimization problem leads to a necessary condition, a different one to a sufficient condition for the Nash equilibria. 
We further extend these results in Section~\ref{sec:vector} to study vector-valued games; importantly, all of the prior results hold under only minimal modifications to the ordering cone.  
We present a number of illustrative examples throughout this work to demonstrate the application of the proposed results for the computation of the set of Nash equilibria.

\section{Shared Constraint Nash Games as Vector Optimization}\label{sec:shared}

In this section, we will construct a particular vector optimization problem whose Pareto optimal points coincide with, and thus equivalently characterize, the set of equilibria of shared constraint Nash games.
Section~\ref{sec:background} will introduce the basic notation and definitions of Nash equilibria, vector optimization and Pareto optimal points. Section~\ref{sec:NashPareto} will contain the main results of this section on the characterization of the set of equilibria of shared constraint Nash games as Pareto optimal points of a certain vector optimization problem.
In Section~\ref{sec:NashEx}, we will illustrate these results with examples.

\subsection{Background and Notation}\label{sec:background}
Let us now introduce the two disparate frameworks for studying competing interests in optimization -- Pareto optimality and Nash equilibria.  In doing so, we will provide basic definitions and notation which will be utilized for the remainder of this work.

Throughout this work we will study $N \geq 2$ player non-cooperative games.  We will occasionally refer to these as Nash games as the relevant solution concept is that of a Nash equilibrium (see Definition~\ref{defn:nash-shared} below).  Player $i$ (for $i = 1,...,N$) of this game considers strategies in the linear space $\xcal_i$; we do not impose any condition that $\xcal_i$ and $\xcal_j$ are equal.
In addition, each player $i$ has a cost function $f_i: \prod_{j = 1}^N \xcal_j \to \bbr$ she seeks to minimize.  This cost function $f_i(x)$ for $x \in \prod_{j = 1}^N \xcal_j$ may depend on player $i$'s strategy $x_i \in \xcal_i$ as well as the strategy chosen by all other players $x_{-i} \in \prod_{j \neq i} \xcal_j$.
Finally, in order to complete the setup of the $N$ player non-cooperative game, we wish to introduce the notion of a shared constraint $\bbx \subseteq \prod_{j = 1}^N \xcal_j$ which must be satisfied by the joint strategy $x \in \prod_{j = 1}^N \xcal_j$ of all players.  This shared constraint condition is common in the literature (see, e.g., \cite{rosen65,facchinei2007generalized}). 
\begin{remark}\label{rem:pure-mixed}
By utilizing the constraint set $\bbx \subseteq \prod_{j = 1}^N \xcal_j$ in the product of the linear spaces $\xcal_i$ we allow for either pure or mixed strategy Nash equilibria.  Furthermore, the use of the general constraint set $\bbx$ permits us to study, e.g., both discrete or continuous games.
For example, in a two player game in which both players can only select in the strategy set $\{0,1\}$, then $\xcal_1 = \xcal_2 =\bbr$ with $\bbx = \{0,1\}^2$. If one allows mixed strategies in this game, which are encoded by the probability $x_i \in [0,1]$ as the probability of selecting strategy $1$, one would set $\mathbb{X}= [0,1]^2$.
\end{remark}

Given all other players fix their strategies $x_{-i}^* \in \prod_{j \neq i} \xcal_j$, player $i$ seeks to optimize
\begin{equation}\label{eq:game-shared}
\min_{x_i \in \xcal_i}\left\{f_i(x_i,x_{-i}^*) \; | \; (x_i,x_{-i}^*) \in \bbx\right\}.
\end{equation}
If this is the aim of all players $i = 1,...,N$, this leads to the definition of a Nash equilibrium.
\begin{definition}\label{defn:nash-shared}
Consider the shared constraint Nash game with $N$ players described by~\eqref{eq:game-shared}.  
That is, player $i$ has strategies in the space $\xcal_i$, cost function $f_i: \prod_{j = 1}^N \xcal_j \to \bbr$, and such that the joint strategies must satisfy a shared feasibility condition $x \in \bbx \subseteq \prod_{j = 1}^N \xcal_j$.  
A joint strategy $x^* \in \bbx$ is called a \textbf{\emph{Nash equilibrium}} if, for any player $i$, 
$f_i(x_i,x_{-i}^*) \geq f_i(x^*)$ for any strategy $x_i \in \xcal_i$ such that $(x_i,x_{-i}^*) \in \bbx$.  The set of all Nash equilibria will be denoted by $\NE(f,\bbx)$.
\end{definition}

Later in this work we will relax the assumption of the shared constraint to study generalized Nash games (see Section~\ref{sec:generalized}).  We will, additionally, relax the assumption that the cost functions are real-valued to study vector-valued games in Section~\ref{sec:vector}.

The main focus of this work is on the relation between the set of Nash equilibria and the Pareto optimal points of a certain vector optimization problem.
Let us now introduce the basic notations needed for that. Consider a linear space $\zcal$. A set $C \subseteq  \zcal$ is called a cone if $\alpha C\subseteq C$ for all $\alpha \geq 0$. A cone $C \subseteq  \zcal$ introduces 
a binary relation 
$\leq_C$ on $ \zcal$ via
\[
	x\leq_C y \iff y-x\in C,
\]
for $x,y\in  \zcal$. This relation is reflexive. If the cone $C$ is additionally convex, i.e., if it holds $C+C\subseteq C$, then the relation $\leq_C$ is reflexive and transitive and thus defines a pre-order relation on  $ \zcal$. If $C$ is not convex, then the relation $\leq_C$ is not transitive and so it
is not a pre-order relation and is sometimes called a `non-convex preference' (see~\cite{anderson1985strong} for a discussion of non-convex preferences). We will also in that case refer to the cone $C$ as the ordering cone of $\zcal$. 
From now on we consider a linear space $\zcal$ endowed with an ordering cone $C$.
When $\zcal=\bbr$ and $C=\bbr_+$, we will leave off the explicit notation for the cone $C$, i.e., $x \leq y$ if and only if $y-x \in \bbr_+$ for $x,y \in \bbr$.

A vector optimization problem is an optimization problem of the form 
\begin{equation}\label{eq:VOP}
	C\mbox{-}\vmin \{g(x) \; | \; x \in \bbx\} 
\end{equation}
 for some feasible region $\bbx \subseteq \xcal$, a function $g: \bbx \to  \zcal$ and an ordering cone $C \subseteq  \zcal$. 
 Note that we do not necessarily assume the cone $C$ to be convex.  
The notation $C\mbox{-}\vmin$ used in~\eqref{eq:VOP} means that one is minimizing the vector-valued function $g$, where the usage of $C$ in this notation emphasizes that one minimizes with respect to the binary relation introduced by the ordering cone $C$. To minimize this vector-valued function, and thus to solve the vector optimization problem~\eqref{eq:VOP}, means to compute the set of minimizers, also called Pareto optimal points, or efficient points, which are defined as follows.  We use the notation $g[\bbx]:=\{g(x) \; | \; x\in \bbx\}  \subseteq  \zcal$ for the image of the feasible set $\bbx$.
  \begin{definition}\label{defn:pareto}
The set of minimizers or Pareto optimal points of problem~\eqref{eq:VOP} 
is defined as 
\begin{align*}
	C\mbox{-}\argmin \{g(x) \; | \; x \in \bbx\} 
    &= \{x^* \in \bbx \; | \; g(x^*) \leq_C g(x) \; \forall x \in \bbx \; \text{with} \; g(x) \leq_C g(x^*)\}.
\end{align*}
This is the set of feasible points $x^*$ that map to minimal elements of $g[\bbx]$, i.e., the set of points $x^* \in \bbx$ such that if $g(x) \leq_C g(x^*)$ for some $x \in \bbx$ then $g(x^*) \leq_C g(x)$ holds.
\end{definition}
The above definitions can be found in~\cite[Def.\ 3.1(c), Def.\ 4.1(a) and Def.\ 7.1(a)]{Jahn11} for the case of a convex cone $C$ and partially ordered normed space $\zcal$ (but can also be stated on the general level provided here, i.e., for a cone $C$ and linear space $\zcal$). They also coincide with the more general definitions given in \cite[Chapter 6]{TW20} by setting the reference set there to be $D =C\backslash(-C)$.

\subsection{Nash equilibria as Pareto optimal points}\label{sec:NashPareto}
In this section, we want to deduce a relation between the set of Nash equilibria and the Pareto optimal points of a certain vector optimization problem.
Note that the Definition~\ref{defn:nash-shared} of a Nash equilibrium can equivalently be written as: the joint strategy $x^* \in \bbx$ is called a Nash equilibrium if, for any player $i$, 
\begin{equation}\label{eq:Nashi}
	x_i^* \in \argmin_{x_i \in \xcal_i}\{f_i(x_i,x_{-i}^*) \; | \; (x_i,x_{-i}^*) \in \bbx\}.
\end{equation}
Naively, one may attempt to construct the vector optimization problem
\begin{equation}\label{eq:wrongPareto}
	\bbr^N_+\mbox{-}\vmin \{f(x) \; | \; x \in \bbx\}
\end{equation}
with $f(x) := (f_1(x),f_2(x),\cdots,f_N(x))$ from the $N$ player non-cooperative game. However, it is well known that the minimizers of this vector optimization problem generally do \emph{not} coincide with any of the Nash equilibria for the game (see, e.g.,~\cite[Chapter 3.3]{tadelis2013game}). This is because, in the underlying optimization problem for each player $i$ in~\eqref{eq:Nashi}, the strategy $x_{-i}^*$ of all other players is fixed.
We will now construct a vector optimization problem that will exactly take these considerations into account.

In order to facilitate this, we will need two modifications to~\eqref{eq:wrongPareto}. First, we need to put the strategy in the objective as well such that one can consider, for each player, $x_{-i}$ and $f_i(x)$ at the same time. That is, we will set $g(x)=(x,f(x))$ as the objective function in~\eqref{eq:VOP} instead of $g(x)=f(x)$, as was done in~\eqref{eq:wrongPareto}. Thus, the linear space $\zcal$ in~\eqref{eq:VOP} is chosen to be $\prod_{i = 1}^N \xcal_i \times \bbr^N$.  Second, we will define a particular ordering cone that can `fix' the component $x_{-i}$ of the vector $x$ for player $i$. That is, we will consider the following vector optimization problem 
\begin{align}
\label{eq:vop} C\mbox{-}\vmin  \{(x,f(x)) \; | \; x \in \bbx\} 
\end{align}
with ordering cone
\begin{align}
\label{eq:C} C &:= \{(x,y) \in \prod_{i = 1}^N \xcal_i \times \bbr^N \; | \; \exists i \in \{1,...,N\}: \; x_{-i} = 0, \; y_i \geq 0\},
\end{align}
where the $0$ in $x_{-i} = 0$ is the zero in the space $\prod_{j\neq i}\xcal_j$. Though the ordering cone $C$ is \emph{not} convex, it is of the form $C = \bigcup_{i = 1}^N C_i$ for convex cones $C_i$ defined as
\begin{equation}\label{eq:Ci}
C_i := \left(\prod_{j = 1}^{i-1} \{0_{\xcal_j}\}\right) \times \xcal_i \times \left(\prod_{j = i+1}^N \{0_{\xcal_j}\}\right) \times \bbr^{i-1} \times \bbr_+ \times \bbr^{N-i},
\end{equation}
where $0_{\xcal_j}$ denotes the zero in the space $\xcal_j$. Whenever the space is clear from the context, we will, for the remainder of this work, drop the subscript notation and just use $0$ for the zero in the corresponding space.
The decomposition of $C$ into the cones $C_i$ will be utilized in the following and plays a particular role in the examples via Corollary~\ref{cor:nash-shared}.
The following remarks motivate and illustrate the choice of the vector optimization problem~\eqref{eq:vop} and its cone~\eqref{eq:C}.
\begin{example}
Let us, for illustration purposes, consider the cone $C$ in the easiest setup of a 2-player game ($N=2$) with $\xcal_1=\xcal_2=\bbr$. Then, $C\subseteq \bbr^4$ is given by
\begin{align}
	\nonumber C&= \{(x,y) \in  \bbr^2\times \bbr^2 \; | \; \exists i \in \{1,2\}: \; x_{-i} = 0, \; y_i \geq 0\}
	\\
	\label{exC_1}&=(\bbr\times \{0\}\times \bbr_+\times \bbr)\cup(\{0\}\times \bbr\times \bbr\times \bbr_+)=C_1\cup C_2.
\end{align}
For a 2-player game ($N=2$) with $\xcal_1=\xcal_2=\bbr^2$, the cone $C\subseteq \bbr^6$ is given by
\begin{align}
	C
	\label{exC_2}&=(\bbr^2\times \{(0,0)\}\times \bbr_+\times \bbr)\cup(\{(0,0)\}\times \bbr^2\times \bbr\times \bbr_+)=C_1\cup C_2.
\end{align}	
\end{example}

\begin{remark}
Note the roles the different components in the cones $C$, respectively $C_i$ play. Consider the following three ordering cones in $\bbr$: 
\begin{itemize}
\item $\bbr_+$ is the natural ordering cone and characterizes the usual order relation $x\leq y\iff x\leq_{\bbr_+} y\iff y-x\in \bbr_+$ that is, e.g., considered in player $i$'s optimization problem~\eqref{eq:game-shared}, see also \eqref{eq:Nashi}, when player $i$ minimizes her objective function $f_i$.
\item $ \{0\}$ is an ordering cone that allows us to `fix' the corresponding component in the sense that $x\leq_{\{0\}}  y \iff y-x \in \{0\}\iff x=y$. That is, the only element that can be compared to $x$ is $x$ itself. This becomes important since we need, in player $i$'s optimization problem~\eqref{eq:game-shared}, to fix the other players strategies to be $x_{-i}^*$. 
\item $\bbr$ is an ordering cone that allows us to ignore the corresponding component in the sense that $x\leq_{\bbr}  y \iff y-x \in\bbr\iff x\geq_{\bbr}  y $ is true for all $x,y\in\bbr$. That is, with respect to this ordering cone, every point would be minimal. Thus, with respect to the corresponding component, where this ordering cone is considered, nothing needs be optimized, so this component is ignored. This becomes important as player $i$ does not care about the cost functions of his opponents in his optimization problem~\eqref{eq:game-shared}. Additionally, the full space also appears as the ordering cone in the component concerned with the strategy $x_i$ of player $i$. In such a capacity, this ordering cone models the idea that there are no further restrictions player $i$ imposes on his own strategy (besides the constraint set $\bbx$ that is incorporated already in problem~\eqref{eq:vop}).
\end{itemize}
The interpretations of the last two cones above hold analogously if one considers the cones $ \{(0,0)\}$ or $\bbr^2$ if $\xcal_i=\bbr^2$, or for more general $\xcal_i$ the cones  $\{0_{\xcal_i}\}$ or $\xcal_i$.

Consider now the specific structure of the preferences induced by the ordering cone $C$ and its decomposition $C = \bigcup_{i = 1}^N C_i$.  These structures are central to relating the set of Nash equilibria to the Pareto optimal points of~\eqref{eq:vop} as is provided in Theorem~\ref{thm:nash-shared} below.  Let us first study the cone $C_i$: consider $x,\bar x \in \prod_{i = 1}^N \xcal_i$, then $(x,f(x)) \geq_{C_i} (\bar x,f(\bar x))$ for some player $i$ if and only if $x_{-i} = \bar x_{-i}$ and $f_i(x) \geq f_i(\bar x)$.  That is, for a `fixed' strategy of all other players, the cost of player $i$ is greater under $x$ than $\bar x$.  For the full cone $C$, the relation $(x,f(x)) \geq_C (\bar x,f(\bar x))$ holds if and only if there exists some player $i$ such that $(x,f(x)) \geq_{C_i} (\bar x,f(\bar x))$.  From this, the correspondence to the notion of Nash equilibrium becomes clear as both the Nash and the Pareto notion with ordering cone $C$ focus on a comparison in which the strategies of all other players have been `fixed'.
\end{remark}

We can now present the main result of this work -- the equivalence between the set of all Nash equilibria for the $N$ player game~\eqref{eq:game-shared} and the Pareto minimizers of the vector optimization problem~\eqref{eq:vop}.
\begin{theorem}\label{thm:nash-shared}
Consider the shared constraint non-cooperative game \eqref{eq:game-shared}.  
The strategy $x^*$ is a Nash equilibrium if and only if it is a minimizer of~\eqref{eq:vop}, i.e., 
\[
	\NE(f,\bbx) = C\mbox{-}\argmin \{(x,f(x)) \; | \; x \in \bbx\},
\] 
where the ordering cone $C$ is defined in~\eqref{eq:C}.
\end{theorem}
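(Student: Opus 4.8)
The plan is to reduce the set identity to the pointwise minimizer characterization recorded in Remark~\ref{rem:minimizers} and then translate the abstract cone order $\leq_C$ into concrete statements about strategies and costs. First I would record the elementary but decisive observation that, for $x \in \bbx$, one has $g(x) \leq_{C_i} g(x^*)$ (where $g(x) := (x,f(x))$) if and only if $x_{-i} = x_{-i}^*$ and $f_i(x) \leq f_i(x^*)$; this is immediate from the block structure of $C_i$ in~\eqref{eq:Ci}, since membership of $g(x^*) - g(x)$ in $C_i$ forces the $-i$ strategy block of $x^* - x$ to vanish while leaving the $i$-th cost coordinate nonnegative and ignoring all remaining coordinates. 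Taking the union over $i$, $g(x) \leq_C g(x^*)$ holds if and only if there is some player $i$ with $x_{-i} = x_{-i}^*$ and $f_i(x) \leq f_i(x^*)$. By Remark~\ref{rem:minimizers}, $x^*$ is a minimizer of~\eqref{eq:vop} exactly when every $x \in \bbx$ with $g(x) \leq_C g(x^*)$ also satisfies $g(x) \geq_C g(x^*)$.

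For the inclusion $\NE(f,\bbx) \subseteq C\mbox{-}\argmin$, suppose $x^*$ is a Nash equilibrium and take $x \in \bbx$ with $g(x) \leq_C g(x^*)$. By the translation above, there is a player $i$ with $x_{-i} = x_{-i}^*$ and $f_i(x) \leq f_i(x^*)$. Because $x = (x_i, x_{-i}^*)$ is a feasible unilateral deviation, the equilibrium condition~\eqref{eq:Nashi} gives the reverse inequality $f_i(x) \geq f_i(x^*)$, so in fact $f_i(x) = f_i(x^*)$. Then $g(x) - g(x^*)$ has a vanishing $-i$ strategy block and a nonnegative (indeed zero) $i$-th cost coordinate, hence lies in $C_i \subseteq C$; that is, $g(x) \geq_C g(x^*)$, as required.

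The reverse inclusion $C\mbox{-}\argmin \subseteq \NE(f,\bbx)$ is where the non-convex union structure of $C$ must be handled carefully, and I expect this to be the main obstacle. I would argue by contradiction: assume $x^* \in C\mbox{-}\argmin$ but $x^*$ is not an equilibrium, so there is a player $i$ and a feasible deviation $x = (x_i, x_{-i}^*) \in \bbx$ with $f_i(x) < f_i(x^*)$. Then $g(x) \leq_{C_i} g(x^*)$, hence $g(x) \leq_C g(x^*)$, so the minimizer property forces $g(x) \geq_C g(x^*)$, i.e.\ $g(x) - g(x^*) \in C_j$ for some player $j$. The crucial point is that $x - x^*$ has only its $i$-th strategy block possibly nonzero, all other blocks coinciding with those of $x^*$. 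If $j = i$, membership in $C_j$ forces $f_i(x) \geq f_i(x^*)$, contradicting strictness; if $j \neq i$, membership in $C_j$ forces the entire $-j$ strategy block of $x - x^*$ to vanish, in particular its $i$-th block, whence $x_i = x_i^*$ and $x = x^*$, again contradicting $f_i(x) < f_i(x^*)$. Either way we reach a contradiction, so no profitable deviation exists and $x^*$ is a Nash equilibrium. The only delicate ingredient throughout is that, because $C$ is not pointed and $\leq_C$ is not antisymmetric, one cannot simply conclude $g(x) = g(x^*)$ from $g(x) \leq_C g(x^*)$ and $g(x) \geq_C g(x^*)$; the argument instead exploits the precise coordinate pattern of $x - x^*$ to rule out each summand $C_j$ separately.
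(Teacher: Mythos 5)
Your proof is correct and follows essentially the same route as the paper: the forward inclusion is the paper's argument verbatim, and your contradiction argument for the reverse inclusion (splitting on whether the compensating cone $C_j$ has $j=i$ or $j\neq i$, and using $x_{-j}=x^*_{-j}$ together with $x_{-i}=x^*_{-i}$ to force $x=x^*$ when $j\neq i$) is exactly the case analysis the paper isolates in Lemma~\ref{lemma:decomposition} and then invokes in the theorem's proof. The only difference is organizational — you inline that decomposition argument rather than stating it as a separate lemma, which the paper reuses later for Corollary~\ref{cor:nash-shared}.
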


To prove the theorem, we will need the following lemma. Recall from Definition~\ref{defn:pareto} that $x^* \in C\mbox{-}\argmin \{(x,f(x)) \; | \; x \in \bbx\}$ if and only if for any $x \in \bbx$ such that $(x,f(x)) \leq_C (x^*,f(x^*))$ then $(x,f(x)) \geq_C (x^*,f(x^*))$.
\begin{lemma}\label{lemma:decomposition} It holds 
    $C\mbox{-}\argmin \{(x,f(x)) \; | \; x \in \bbx\}= \bigcap_{i = 1}^N C_i\mbox{-}\argmin \{(x,f(x)) \; | \; x \in \bbx\},$
where $C$ and $C_i$ are defined in~\eqref{eq:C} and~\eqref{eq:Ci}.
\end{lemma}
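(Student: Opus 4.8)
The plan is to prove the two set inclusions separately, using throughout the characterization of Pareto optimality recalled in Remark~\ref{rem:minimizers}: writing $g(x) := (x,f(x))$, a point $x^* \in \bbx$ lies in $C\mbox{-}\argmin\{g(x) \; | \; x\in\bbx\}$ precisely when, for every $x\in\bbx$ with $g(x)\leq_C g(x^*)$, one also has $g(x)\geq_C g(x^*)$, and likewise for each $C_i$. The one structural fact I would exploit repeatedly is that, since $C=\bigcup_{i=1}^N C_i$, the relation $g(x)\leq_C g(x^*)$ holds if and only if $g(x)\leq_{C_i} g(x^*)$ for at least one index $i$. I would also read off from the explicit form~\eqref{eq:Ci} of $C_i$ exactly what $g(x^*)-g(x)\in C_i$ means: the strategy components of all players other than $i$ agree, $x_{-i}=x_{-i}^*$, and the $i$-th cost satisfies $f_i(x^*)\geq f_i(x)$, while the $i$-th strategy coordinate and all other cost coordinates are unconstrained.

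The inclusion $\bigcap_{i=1}^N C_i\mbox{-}\argmin \subseteq C\mbox{-}\argmin$ is the easy direction. Suppose $x^*$ lies in every $C_i\mbox{-}\argmin$ and take any $x\in\bbx$ with $g(x)\leq_C g(x^*)$. By the union structure there is some $j$ with $g(x)\leq_{C_j} g(x^*)$; applying membership of $x^*$ in $C_j\mbox{-}\argmin$ gives $g(x)\geq_{C_j} g(x^*)$, hence $g(x)-g(x^*)\in C_j\subseteq C$ and so $g(x)\geq_C g(x^*)$. This yields $x^* \in C\mbox{-}\argmin$.

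The reverse inclusion $C\mbox{-}\argmin\subseteq\bigcap_{i=1}^N C_i\mbox{-}\argmin$ is where the real work lies, and it is the step I expect to be the main obstacle. I would fix $x^* \in C\mbox{-}\argmin$ and an index $i$, and take $x\in\bbx$ with $g(x)\leq_{C_i} g(x^*)$. Since $C_i\subseteq C$ this also gives $g(x)\leq_C g(x^*)$, so the $C$-optimality of $x^*$ yields $g(x)\geq_C g(x^*)$, i.e.\ $g(x)-g(x^*)\in C_j$ for some $j$. The goal is to upgrade this membership to $g(x)-g(x^*)\in C_i$, and the argument splits on whether $j=i$. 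The hypothesis $g(x)\leq_{C_i} g(x^*)$ already forces $x_{-i}=x_{-i}^*$, which is exactly the strategy-coordinate condition needed for membership in $C_i$; so only the cost inequality $f_i(x)\geq f_i(x^*)$ remains to be checked. If $j=i$, then $g(x)-g(x^*)\in C_i$ gives $f_i(x)\geq f_i(x^*)$ directly (and combined with $f_i(x^*)\geq f_i(x)$ from the hypothesis one in fact obtains equality). If $j\neq i$, then membership in $C_j$ forces $x_{-j}=x_{-j}^*$, in particular $x_i=x_i^*$; together with $x_{-i}=x_{-i}^*$ this means $x=x^*$, whence $g(x)-g(x^*)=0\in C_i$ trivially. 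In either case $g(x)\geq_{C_i} g(x^*)$, establishing $x^*\in C_i\mbox{-}\argmin$; as $i$ was arbitrary, $x^* \in \bigcap_{i=1}^N C_i\mbox{-}\argmin$, which completes the proof. The delicate point, and the one I would be most careful about, is precisely this case distinction: the coordinate-fixing built into each $C_i$ is what lets the two memberships $g(x^*)-g(x)\in C_i$ and $g(x)-g(x^*)\in C_j$ with $j\neq i$ collapse to $x=x^*$, and this is the feature of the cone that makes the decomposition hold.
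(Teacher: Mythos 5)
Your proof is correct and follows essentially the same route as the paper's: both directions rest on the union structure $C=\bigcup_i C_i$, and the key step in the inclusion $C\mbox{-}\argmin\subseteq\bigcap_i C_i\mbox{-}\argmin$ is exactly the paper's case distinction on $j=i$ versus $j\neq i$, with the coordinate-fixing in $C_i$ and $C_j$ forcing $x=x^*$ in the latter case. The only cosmetic difference is that you prove the easy inclusion directly where the paper argues by contradiction.
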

\proof{Proof}
Let $x^*\in C\mbox{-}\argmin \{(x,f(x)) \; | \; x \in \bbx\}$ and assume $x^* \notin C_i\mbox{-}\argmin \{(x,f(x)) \; | \; x \in \bbx\}$ for some player $i$.
Therefore, by the construction of the $C_i$ minimizers, there exists some $x \in \bbx$ such that $x_{-i} = x_{-i}^*$ with $f_i(x) < f_i(x^*)$; by construction of the cone $C$ this implies $(x,f(x)) \leq_C (x^*,f(x^*))$ as well.
However, as $x^*$ is a $C$ minimizer, it must follow that $(x,f(x)) \geq_C (x^*,f(x^*))$, i.e., there exists some player $j$ such that $x_{-j} = x_{-j}^*$ and $f_j(x) \geq f_j(x^*)$.  If $j \neq i$ then $x = x^*$ (i.e., $f_i(x) = f_i(x^*)$); if $j = i$ then $f_i(x) \geq f_i(x^*)$.  In either case we recover a contradiction.

Now consider  $x^*\in \bigcap_{i = 1}^N C_i\mbox{-}\argmin \{(x,f(x)) \; | \; x \in \bbx\}$ and assume that $x^*\notin C\mbox{-}\argmin \{(x,f(x)) \; | \; x \in \bbx\}$. That is, there is an $x \in \bbx$ with $(x,f(x)) \leq_{C_i} (x^*,f(x^*))$ for some $i$ 
such that there is no $j=1,...,N$ with $(x,f(x)) \geq_{C_j} (x^*,f(x^*))$. In particular $(x,f(x)) \geq_{C_i} (x^*,f(x^*))$ does not hold, which contradicts $x^*\in C_i\mbox{-}\argmin \{(x,f(x)) \; | \; x \in \bbx\}$.
\endproof

\proof{Proof of Theorem~\ref{thm:nash-shared}}
Consider $x^*$ to be a Nash equilibrium of the game.  By definition of the Nash equilibrium, for any player $i$, if $x \in \bbx$ with $x_{-i} = x_{-i}^*$ then $f_i(x) \geq f_i(x^*)$.  Therefore if $(x,f(x)) \leq_C (x^*,f(x^*))$ then $x_{-i} = x_{-i}^*$ for some $i$ and, from being a Nash equilibrium, we know $f_i(x) \geq f_i(x^*)$, i.e., $(x,f(x)) \geq_C (x^*,f(x^*))$.

Now consider $x^*$ be a minimizer of the vector optimization problem, i.e.,  $x^*\in C\mbox{-}\argmin \{(x,f(x)) \; | \; x \in \bbx\}$.  For each player $i$ consider $x_{-i} = x_{-i}^*$ with $x \in \bbx$ then if $f_i(x) \leq f_i(x^*)$, the minimality of $x^*$ and Lemma~\ref{lemma:decomposition} imply 
 $f_i(x) \geq f_i(x^*)$. 
 This is the definition of the Nash equilibrium, i.e., for every player $i$: $(x_i,x_{-i}^*) \in \bbx$ implies $f_i(x_i,x_{-i}^*) \geq f_i(x^*)$.
\endproof

The following corollary is immediate and shows that $x^*$ is a Nash equilibrium if and only if it is a minimizer with respect to $C_i$ for each player $i$.  We wish to highlight this result, as it can be directly expanded to generalized games, see Section~\ref{sec:genNash}.  Additionally, this construction is useful in the subsequent examples, as each of these cones $C_i$ is convex in contrast to the full ordering cone $C$ that is non-convex.  This allows for the application of traditional vector optimization algorithms as used, e.g., in Examples~\ref{ex:generalized} and~\ref{ex:vector} in the more general settings.
\begin{corollary}\label{cor:nash-shared}
$x^*$ is a Nash equilibrium if and only if $x^* \in C_i\mbox{-}\argmin \{(x,f(x)) \; | \; x \in \bbx\}$ for every $i = 1,...,N$, i.e., 
\[
	\NE(f,\bbx) = \bigcap_{i = 1}^N C_i\mbox{-}\argmin \{(x,f(x)) \; | \; x \in \bbx\}.
\]
\end{corollary}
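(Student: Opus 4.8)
The plan is to obtain the statement as a direct composition of the two results already established, namely Theorem~\ref{thm:nash-shared} and Lemma~\ref{lemma:decomposition}. Both of these are set equalities over the same feasible region $\bbx$ and the same objective $(x,f(x))$, so the corollary should follow simply by chaining them, with essentially no additional argument required.

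Concretely, I would first invoke Theorem~\ref{thm:nash-shared} to write $\NE(f,\bbx) = C\mbox{-}\argmin \{(x,f(x)) \; | \; x \in \bbx\}$, identifying the set of Nash equilibria with the Pareto optimal points under the full (non-convex) ordering cone $C$. Then I would apply Lemma~\ref{lemma:decomposition}, which decomposes this $C$-argmin as the intersection $\bigcap_{i=1}^N C_i\mbox{-}\argmin \{(x,f(x)) \; | \; x \in \bbx\}$ over the convex cones $C_i$. Transitivity of equality then yields $\NE(f,\bbx) = \bigcap_{i=1}^N C_i\mbox{-}\argmin \{(x,f(x)) \; | \; x \in \bbx\}$, which is exactly the claim. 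The equivalence `$x^*$ is a Nash equilibrium if and only if $x^* \in C_i\mbox{-}\argmin$ for every $i$' is merely the elementwise reading of this set equality.

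I do not anticipate any genuine obstacle here: all of the substantive work has been absorbed into Lemma~\ref{lemma:decomposition}, whose proof is where the failure of transitivity of $\leq_C$ must be handled carefully (distinguishing whether the witnessing index $j$ on the right-hand side agrees with the index $i$ on the left). As an alternative, should one wish to make the corollary self-contained, I would reprove it directly by mimicking the proof of Theorem~\ref{thm:nash-shared} one player at a time: for the forward direction, fix a player $i$ and observe that $(x,f(x)) \leq_{C_i} (x^*,f(x^*))$ forces $x_{-i}=x_{-i}^*$, so the Nash property of $x^*$ gives $f_i(x)\geq f_i(x^*)$ and hence $(x,f(x)) \geq_{C_i} (x^*,f(x^*))$; for the converse, $C_i$-minimality applied to the candidate deviation $x=(x_i,x_{-i}^*)$ rules out any profitable unilateral deviation for player $i$. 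Either route gives the result, but the compositional one is shortest given what is already proved, which is presumably why the authors call it immediate.
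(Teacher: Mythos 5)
Your proposal is correct and follows exactly the paper's own route: the paper proves this corollary by the one-line observation that it "follows immediately by Theorem~\ref{thm:nash-shared} and Lemma~\ref{lemma:decomposition}," which is precisely your compositional argument. Your remarks on where the real work lives (in the lemma's handling of non-transitivity) and the optional self-contained reproof are accurate but not needed.
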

\proof{Proof}
This follows immediately by Theorem~\ref{thm:nash-shared} and Lemma~\ref{lemma:decomposition}.
\endproof

\begin{remark}\label{Rem:BestResp}
As will be made explicit in Examples~\ref{ex:odd-even} and~\ref{ex:shared-multiple} below, the minimizers $C_i\mbox{-}\argmin\{(x,f(x)) \; | \; x \in \bbx\}$ with respect to player $i$'s ordering cone $C_i$ exactly coincide with the graph of player $i$'s best response function $B_i: \prod_{j \neq i} \xcal_j \to 2^{\xcal_i}$: \[\operatorname{graph} B_i := \left\{x^* \in \bbx \; | \; x_i^* \in \argmin_{x_i \in \xcal_i} \{f_i(x_i,x_{-i}^*) \; | \; (x_i,x_{-i}^*) \in \bbx\}\right\}.\]
As such, the result presented in Corollary~\ref{cor:nash-shared} can be reformulated as the well known characterization (see e.g.~\cite{BeckStein22})
\[\NE(f,\bbx) = \bigcap_{i = 1}^N \operatorname{graph} B_i,\]
but it now also allows one to compute $\operatorname{graph} B_i$ via vector optimization techniques.
In this way, Corollary~\ref{cor:nash-shared} can also be seen through the lens of finding a fixed point of the best response functions $x^* \in B(x^*) := (B_1(x_{-1}^*) , ... , B_N(x_{-N}^*))$.

Furthermore for a differentiable and convex game (i.e., $f_i$ is differentiable and convex for every player $i$ and $\bbx$ is convex), the first order sufficient condition of the vector optimization problem $C_i\mbox{-}\vmin\{(x,f(x)) \; | \; x \in \bbx\}$ is equivalent to the first order sufficient condition for the best response function for player $i$.  
That is, by modification of Theorem 12.3.1 of \cite{khan2015} (to account for the possibility that $C_i \cap (-C_i) \supseteq \{0\}$) and setting $g(x) := (x,f(x))$,
\begin{align*}
D(g + C_i)(x^*,g(x^*))(x - x^*) \cap -C_i \subseteq C_i \quad \forall x \in \bbx,
\end{align*}
where
$
D(g + C_i)(x^*,g(x^*))(\bar x):= \{y \in \ycal_i \; | \; \exists t_n \searrow 0, \, (x_n,y_n) \to (\bar x,y): \; g(x^*) + t_n y_n \in g(x^* + t_n x_n) + C_i\}
$
is the contingent derivative of the upper image of $g$ at $(x^*,g(x^*))$, i.e., the mapping whose graph is the cone tangent to the graph of the upper image of $g$ at $(x^*,g(x^*))$.
This first order condition is satisfied if and only if $\nabla_{x_i}f_i(x^*)^\T(x_i-x_i^*) \geq 0$ for every $x_i \in \xcal_i$ such that $(x_i,x_{-i}^*) \in \bbx$. This follows from the particular structure of the cone $C_i$ and properties of the contingent derivative.
We note that such results can be compared with, e.g., the (quasi)-variational inequality sufficient condition for Nash games presented in, e.g.,~\cite{harker1991,facchinei2007generalized}.
\end{remark}

\subsection{Examples}\label{sec:NashEx}
We will now illustrate the characterization of the set of Nash equilibria as the intersection of minimizers of $N$ vector optimization problems given in Corollary~\ref{cor:nash-shared} on two examples. We chose two simple examples in which the set of Nash equilibria can easily be deduced and the set of minimizers of the vector optimization problems can be easily depicted. More complex examples, where the characterization of the set of Nash equilibria as the intersection of minimizers of $N$ vector optimization problems is computationally exploited, will be presented in the more general settings of Sections~\ref{sec:generalized} and~\ref{sec:vector}.

\begin{example}\label{ex:odd-even}
Consider the zero-sum $2$ player game of odds and evens.  In this game, both players must choose to say $0$ or $1$ simultaneously; if the sum is even then player $1$ receives a payout of $1$ from player $2$ and vice versa if the sum is odd.  For this game we will consider the mixed strategies so that player $1$ says $1$ with probability $x_1 \in [0,1]$ and similarly for player $2$ with probability $x_2 \in [0,1]$.  The cost functions $f_1: \bbr^2 \to \bbr$ and $f_2: \bbr^2 \to \bbr$, providing the expected losses of each player, are given by
\begin{align*}
f_1(x) &= -x_1 x_2 - (1-x_1)(1-x_2) + x_1 (1-x_2) + (1-x_1) x_2 = -1 + 2x_1 + 2x_2 - 4x_1 x_2\\
f_2(x) &= x_1 x_2 + (1-x_1)(1-x_2) - x_1 (1-x_2) - (1-x_1) x_2 = 1 - 2x_1 - 2x_2 + 4x_1 x_2.
\end{align*}
As the strategies consist of choosing the probabilities $x_i$, the shared constraint set is $\bbx := [0,1]^2$ with real-valued strategies $\xcal_1 = \xcal_2 = \bbr$. The cones $C$ and $C_i$ are given as in~\eqref{exC_1} with $\zcal=\bbr^4$. Note that since the objective functions are not convex, the subproblems $C_i\mbox{-}\argmin \{(x,f(x)) \; | \; x \in \bbx\}$, despite having a convex ordering cone $C_i$ for $i=1,2$, are \emph{not} convex vector optimization problems. However, as they are easy to solve, one obtains that the set of Pareto optimal points of problem $C_1\mbox{-}\argmin \{(x,f(x)) \; | \; x \in \bbx\}\subseteq\bbr^2$, also depicted in Figure~\ref{fig:odd-even-1}, is given by 
\begin{align*}
C_1&\mbox{-}\argmin \{(x,f(x)) \; | \; x \in \bbx\} = \left\{(0,x_2) \; | \; x_2 \in [0,0.5)\right\} \cup \left\{(x_1,0.5) \; | \; x_1 \in [0,1]\right\} \cup \left\{(1,x_2) \; | \; x_2 \in (0.5,1]\right\}.
\end{align*}
Similarly, the set of Pareto optimal points of problem $C_2\mbox{-}\argmin \{(x,f(x)) \; | \; x \in \bbx\}$, which is depicted in Figure~\ref{fig:odd-even-2}, is given by
\begin{align*}
C_2&\mbox{-}\argmin \{(x,f(x)) \; | \; x \in \bbx\} = \left\{(x_1,1) \; | \; x_1 \in [0,0.5)\right\} \cup \left\{(0.5,x_2) \; | \; x_2 \in [0,1]\right\} \cup \left\{(x_1,0) \; | \; x_1 \in (0.5,1]\right\}.
\end{align*}
From this we find that the intersection of these two sets of Pareto optimal points, and thus by Corollary~\ref{cor:nash-shared} the set of Nash equilibria as well as the set of Pareto optimal points of $C\mbox{-}\argmin \{(x,f(x)) \; | \; x \in \bbx\}$, is given by
\begin{align*}
\NE(f,\bbx) =\bigcap_{i = 1}^2 C_i\mbox{-}\argmin \{(x,f(x)) \; | \; x \in \bbx\} &= \{(0.5,0.5)\}.
\end{align*}
Thus, in this example there is a unique Nash equilibrium and this equilibrium consists of the optimal strategy for both players to say $0$ or $1$ randomly with probability $0.5$.
This solution can be verified against the minimax approach pioneered from John von Neumann~(\cite{neumann1928theorie}) as this is a two-player zero-sum finite game.

Recall from Remark~\ref{Rem:BestResp} that the Pareto optimal points $C_i\mbox{-}\argmin\{(x,f(x)) \; | \; x \in \bbx\}$ provide the graph of the best response function for player $i$. This can be seen directly in Figures~\ref{fig:odd-even-1} and~\ref{fig:odd-even-2}.
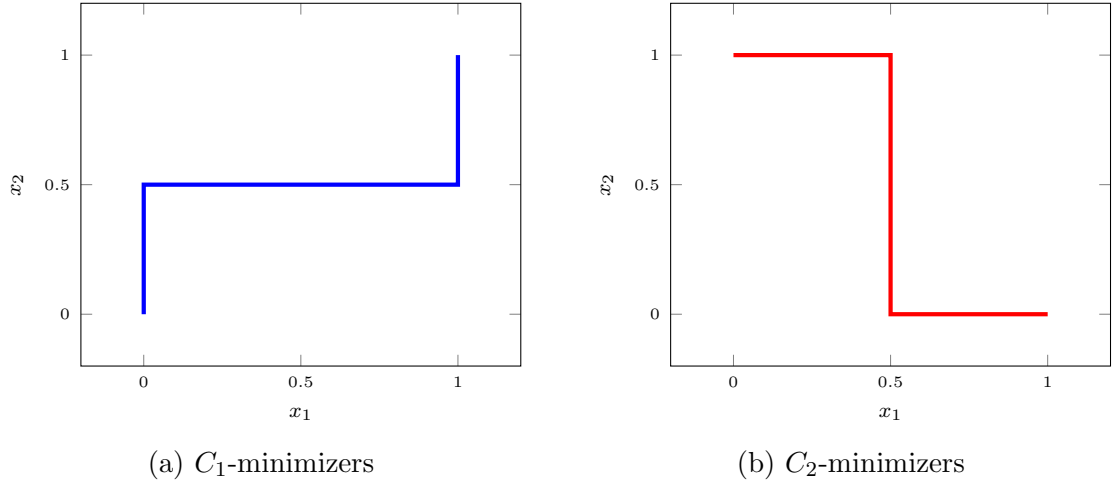
\begin{figure}[t]
\centering
\begin{subfigure}[t]{0.45\textwidth}
\centering
\begin{tikzpicture}
\begin{axis}[
    width=\textwidth,
    xlabel={$x_1$},
    ylabel={$x_2$},
    xmin=-0.2, xmax=1.2,
    ymin=-0.2, ymax=1.2,
    label style={font=\scriptsize},
    ticklabel style={font=\tiny},
    legend pos={north west},
    legend style={font=\scriptsize},
    ]
\addplot [color=blue,solid,ultra thick] coordinates {
    (0,0)
    (0,0.5)
    (1,0.5)
    (1,1)};
\end{axis}
\end{tikzpicture}
\caption{$C_1$-minimizers}
\label{fig:odd-even-1}
\end{subfigure}
~
\begin{subfigure}[t]{0.45\textwidth}
\centering
\begin{tikzpicture}
\begin{axis}[
    width=\textwidth,
    xlabel={$x_1$},
    ylabel={$x_2$},
    xmin=-0.2, xmax=1.2,
    ymin=-0.2, ymax=1.2,
    label style={font=\scriptsize},
    ticklabel style={font=\tiny},
    legend pos={north west},
    legend style={font=\scriptsize},
    ]
\addplot [color=red,solid,ultra thick] coordinates {
    (0,1)
    (0.5,1)
    (0.5,0)
    (1,0)};
\end{axis}
\end{tikzpicture}
\caption{$C_2$-minimizers}
\label{fig:odd-even-2}
\end{subfigure}
\caption{Minimizers for the decomposed vector optimization problem associated with the ordering cone $C_i$ for player $i$ in Example~\ref{ex:odd-even}.}
\label{fig:odd-even}
\end{figure}
\end{example}

\begin{example}\label{ex:shared-multiple}
We now consider a modification to the zero-sum game presented in Example~\ref{ex:odd-even} to make it a general-sum game and such that the set of Nash equilibria will no longer be a singleton.  Specifically, player $1$ is exactly as in Example~\ref{ex:odd-even},  whereas player 2 has the new cost function so that she is penalized for diverging in her strategy from player $1$.  Mathematically this is encoded in the game with cost functions
\begin{align*}
f_1(x) &= 
-1 + 2x_1 + 2x_2 - 4x_1 x_2,
\qquad f_2(x) = |x_1 - x_2|
\end{align*}
and constraint set $\bbx = [0,1]^2$ with real-valued strategies $\xcal_1 = \xcal_2 = \bbr$. $C, C_i$ and $\zcal$ are as in Example~\ref{ex:odd-even} above.  
The set $C_1\mbox{-}\argmin \{(x,f(x)) \; | \; x \in \bbx\}$ of the (\emph{non}-convex) first problem is exactly as in Example~\ref{ex:odd-even}.
The set of minimizers of the ($C_2$-convex) second problem is
\begin{align*}
C_2\mbox{-}\argmin \{(x,f(x)) \; | \; x \in \bbx\} &= \{(x,x) \; | \; x \in [0,1]\}.
\end{align*}
Both are depicted in Figure~\ref{fig:shared-multiple}.
Their intersection is, by Corollary~\ref{cor:nash-shared}, the set of Nash equilibria
\begin{align*}
\NE(f,\bbx) =\bigcap_{i = 1}^2 C_i\mbox{-}\argmin \{(x,f(x)) \; | \; x \in \bbx\} &= \{(0,0) \; , \; (0.5,0.5) \; , \; (1,1)\}.
\end{align*}
By Theorem~\ref{thm:nash-shared} or Lemma~\ref{lemma:decomposition}, this also coincides with the set of Pareto optimal points of the vector optimization problem $C\mbox{-}\argmin \{(x,f(x)) \; | \; x \in \bbx\}$.
Thus, this Nash game has three equilibria, which can also be seen as the three intersection points marked in black in Figure~\ref{fig:shared-multiple}.
\begin{figure}[t]
\centering
\begin{tikzpicture}
\begin{axis}[
    width=0.45\textwidth,
    xlabel={$x_1$},
    ylabel={$x_2$},
    xmin=-0.2, xmax=1.2,
    ymin=-0.2, ymax=1.2,
    label style={font=\scriptsize},
    ticklabel style={font=\tiny},
    legend pos={north west},
    legend style={font=\scriptsize},
    ]
\addplot [color=blue,solid,ultra thick] coordinates {
    (0,0)
    (0,0.5)
    (1,0.5)
    (1,1)};
\addlegendentry{$C_1$-minimizers}
\addplot [domain=0:1,solid,color=red,ultra thick]{x};
\addlegendentry{$C_2$-minimizers}
\addplot [only marks,draw=black] coordinates {
    (0,0)
    (0.5,0.5)
    (1,1)};
\end{axis}
\end{tikzpicture}
\caption{Minimizers for the vector optimization problem associated with ordering cone $C_i$ for player $i$ in Example~\ref{ex:shared-multiple}.  The intersection, and thus the Nash equilibria, are highlighted as solid black points.}
\label{fig:shared-multiple}
\end{figure}
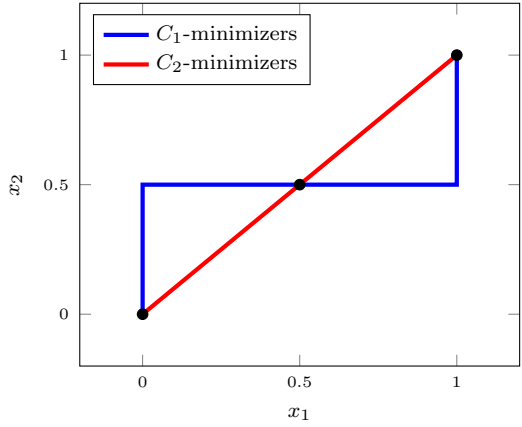
\end{example}

\section{Generalized Nash Games as Vector Optimization}\label{sec:generalized}

In this section, we consider generalized Nash games, i.e. where we do not assume shared constraints as in Section~\ref{sec:shared}. In Section~\ref{sec:genNash}, the set of generalized Nash equilibria is shown to coincide with the intersection of Pareto optimal points of $N$ vector optimization problems.
In Section~\ref{sec:genNash-shared}, a relationship between generalized and shared-constraint games constructed from the individual constraints is deduced. In particular, in Theorem~\ref{thm:nash-generalized} and Corollary~\ref{Cor:Nashii} a superset and a subset of the set of generalized Nash equilibria is derived based on Pareto optimal points of certain vector optimization problems corresponding to these shared-constraint games. 
We illustrate these results with two examples that we study in both sections.

\subsection{Characterization of generalized Nash equilibria}\label{sec:genNash}
Within this section, we study generalized Nash games, that is, $N$ player non-cooperative games without the requirement that the constraint sets for all players coincide.  That is, each player $i$ has a constraint set $\bbx_i \subseteq \prod_{j = 1}^N \xcal_j$ on the joint strategy of all players.  With this modification, a generalized Nash game is one in which, given all other players fix their strategies $x_{-i}^* \in \prod_{j \neq i} \xcal_j$, player $i$ seeks to optimize
\begin{equation}\label{eq:game-generalized}
\min_{x_i \in \xcal_i} \left\{f_i(x_i,x_{-i}^*) \; | \; (x_i,x_{-i}^*) \in \bbx_i \right\} \quad \forall i = 1,...,N.
\end{equation}
This construction leads to an updated definition of a Nash equilibrium.
\begin{definition}\label{defn:nash-generalized}
Consider the generalized game with $N$ players described by~\eqref{eq:game-generalized}.  
That is, player $i$ has strategies in the linear space $\xcal_i$, cost function $f_i: \prod_{j = 1}^N \xcal_j \to \bbr$, and such that the joint strategies must satisfy the feasibility condition $x \in \bbx_i \subseteq \prod_{j = 1}^N \xcal_j$.  
The joint strategy $x^* \in \bigcap_{i = 1}^N \bbx_i$ is called a \textbf{\emph{(generalized) Nash equilibrium}} if, for any player $i$, $f_i(x_i,x_{-i}^*) \geq f_i(x^*)$ for any strategy $x_i \in \xcal_i$ such that $(x_i,x_{-i}^*) \in \bbx_i$.  The set of all Nash equilibria will be denoted by $\NE(f,(\bbx_i)_{i = 1}^N)$.
\end{definition}

Note again, that the joint strategy $x^* \in \bigcap_{i = 1}^N \bbx_i$ is called a Nash equilibrium if, for any player $i$, 
\begin{equation}\label{eq:Nashii}
	x_i^* \in \argmin_{x_i \in \xcal_i} \{f_i(x_i,x_{-i}^*) \; | \; (x_i,x_{-i}^*) \in \bbx_i\}.
\end{equation}

Immediately, we are able to provide the second main result of this paper which characterizes the set of generalized Nash equilibria as the intersection of $N$ vector optimization problems.  The result of Corollary~\ref{cor:intersec_gen} can be readily compared to Corollary~\ref{cor:nash-shared} for shared constraint games.  
\begin{corollary}\label{cor:intersec_gen}
Consider the generalized non-cooperative game~\eqref{eq:game-generalized}.
The strategy $x^*$ is a Nash equilibrium if and only if $x^* \in C_i\mbox{-}\argmin\{(x,f(x)) \; | \; x \in \bbx_i\}$ for every $i = 1,...,N$, i.e.,
\begin{align*}
\NE(f,(\bbx_i)_{i = 1}^N) =  \bigcap_{i = 1}^N  C_i\mbox{-}\argmin\{(x,f(x)) \; | \; x \in \bbx_i\}.
\end{align*}
\end{corollary}
As the proof of Corollary~\ref{cor:intersec_gen} follows comparably to that of Theorem~\ref{thm:nash-shared}, we omit its proof.
Intuitively this result follows since the set of Nash equilibria are all of the fixed points of the best response functions and, as discussed in Remark~\ref{Rem:BestResp}, $C_i\mbox{-}\argmin\{(x,f(x)) \; | \; x \in \bbx_i\}$ coincides with the graph of player $i$'s best response function.
\endproof

The following two examples will illustrate Corollary~\ref{cor:intersec_gen}. 
Both examples are $2$ player games with linear objectives and constraints.  This allows us to utilize methods for linear vector optimization to construct the set of optimizers.  Details on this are given in Remark~\ref{Rem:linVOP} below.  
As far as the authors are aware, no prior methods exist which are able to compute the set of all Nash equilibria in these examples.

\begin{example}
\label{ex:generalized}
Consider the $N=2$ player generalized game with linear objectives and constraints
\begin{align}
\nonumber x_1^* &\in \argmax_{x_1 \in \bbr^2}\{2x_{11} + x_{12} \; | \; (x_1,x_2^*) \in \bbx_1\}, 
\qquad x_2^* \in \argmax_{x_2 \in \bbr^2}\{2x_{21} + 3x_{22} \; | \; (x_1^*,x_2) \in \bbx_2\},
\end{align}
\begin{align}
\label{ex:genX1} \bbx_1&=\left\{x\in \bbr^4 \; | \; \left(\begin{array}{cccc} 4 & 1 & -\frac{16}{3} & -\frac{1}{3} \\ 1 & 2 & 0 & 0 \end{array}\right)\left(\begin{array}{c} x_1 \\ x_2 \end{array}\right) \leq \left(\begin{array}{c} 0 \\ 5 \end{array}\right),~\begin{array}{l} x_1\in [0,5]\times[0,2.5], \\ x_2\in [0,1.5]\times[0,6] \end{array} \right\},\\
\label{ex:genX2} \bbx_2&=\left\{x\in \bbr^4 \; | \; \left(\begin{array}{cccc} 0 & 0 & 4 & 1 \\ 15 & -10 & 1 & 2 \end{array}\right)\left(\begin{array}{c} x_1 \\ x_2 \end{array}\right) \leq \left(\begin{array}{c} 6 \\ 0 \end{array}\right),~\begin{array}{l} x_1\in [0,5]\times[0,2.5], \\ x_2\in [0,1.5]\times[0,6] \end{array} \right\},
\end{align}
in which player $1$ plays strategy $x_1 = (x_{11},x_{12}) \in \xcal_1=\bbr^2$ and player $2$ plays strategy $x_2 = (x_{21},x_{22}) \in \xcal_2=\bbr^2$.
Thus we consider a generalized game~\eqref{eq:game-generalized} with $f_1(x)=-(2x_{11} + x_{12})$, $f_2(x)=-(2x_{21} + 3x_{22})$, and where the constraint sets are given by~\eqref{ex:genX1} and~\eqref{ex:genX2}.

In order to solve for the set of Nash equilibria, we solve (by Corollary~\ref{cor:intersec_gen}) the corresponding
two linear vector optimization problems 
$
C_i\mbox{-}\argmin \{(x,f(x)) \; | \; x \in \bbx_i\}
$
with polyhedral convex cones $C_i$ for $i=1,2$. 
The intersection of these two sets of Pareto optimal points, and thus the set of Nash equilibria, is given by 
\[
\NE(f,(\bbx_i)_{i = 1}^2) = \{x^1,...,x^4\}
\]
with equilibrium points
$x^1 = ((0,0) \, , \, (0,0))$, 
$x^2 = ((0,2) \, , \, (0,6))$, 
$x^3 = ((1,2) \, , \, (1,2))$, 
and $x^4 \approx ((1.1876,1.9062) \, , \, (1.2481,0))$.
\end{example}
The following example shows that the proposed method can also find a set of Nash equilibria consisting of infinitely many equilibria.
\begin{example}
\label{ex:generalized-2}
Consider now the following modification of the $N=2$ player game given in Example~\ref{ex:generalized}:
\begin{align*}
x_1^* &\in \argmax_{x_1 \in \bbr^2}\{2x_{11} + x_{12} \; | \; (x_1,x_2^*) \in \hat\bbx_1\},
\qquad x_2^* \in \argmax_{x_2 \in \bbr^2}\{2x_{21} + 3x_{22} \; | \; (x_1^*,x_2) \in \hat\bbx_2\}, 
\end{align*}
with $\hat\bbx_1 = \bbx_1 \cap \{x \in \bbr^4 \; | \; 4x_{21}+x_{22} \leq 6\}$ and $\hat\bbx_2 = \bbx_2 \cap \{x \in \bbr^4 \; | \; 4x_{11} + x_{12} \leq \frac{16}{3}x_{21} + \frac{1}{3}x_{22}\}$ where $\bbx_1$ and $\bbx_2$ are defined in~\eqref{ex:genX1} and~\eqref{ex:genX2}. As in Example~\ref{ex:generalized}, player $1$ plays strategy $x_1 \in \bbr^2$ and player $2$ plays strategy $x_2 \in \bbr^2$. The objectives $f_1,f_2$ and the cones $C,C_1,C_2$ are as in Example~\ref{ex:generalized}, only the constraint sets $\hat\bbx_i$ are different from Example~\ref{ex:generalized}.

As with Example~\ref{ex:generalized}, in order to solve for the set of Nash equilibria, we solve (by Corollary~\ref{cor:intersec_gen}) the corresponding two linear vector optimization problems
$C_i\mbox{-}\argmin\{(x,f(x)) \; | \; x \in \hat\bbx_i\}$
with polyhedral convex cones $C_i$ for $i=1,2$.  
The intersection of these two sets of Pareto optimal points, and thus the set of Nash equilibria, is given by
\[
\NE(f,( \hat\bbx_i)_{i = 1}^2) = \{x^1\} \cup P_2 \cup P_3 \cup \{x^4\}
\]
where
$P_2 = \co\{x^2 \, , \, x^5\}$ 
and $P_3 = \co\{x^3 \, , \, x^5\}$ 
for extremal points $x^1,...,x^4$ as in Example~\ref{ex:generalized} and $x^5 = ((0,2.5) \, , \, (0.125,5.5))$.
Here, we denote by $\co A$ the convex hull of a set $A$.
\end{example}

\begin{remark}\label{Rem:linVOP} 
Both Example~\ref{ex:generalized} and~\ref{ex:generalized-2} considered in this section are Nash games with linear objectives and linear constraints with $ \xcal_i=\bbr^2$. 
This implies that for each $i$, all of the considered vector optimization problems $C_i\mbox{-}\argmin \{(x,f(x)) \; | \; x \in\bbx_i\}$ are linear vector optimization problems.
However, the cones $C_i$ are unusual in the sense that they contain lines and have an empty interior. Because of this, standard solvers might not be applied directly. However, these linear vector optimization problems can be transformed into multi-objective linear programs (i.e.\ into linear vector optimization problems with a natural ordering cone) 
by multiplying the objective function with the matrix $Z_i$ containing the $m_i$ generating vectors of the positive dual cone of $C_i$. The particular structure of $C_i$ will always lead to $Z_i^\T(x,f(x))=(x_{-i}, -x_{-i}, f_i(x))$. That is, 
\[
\bbr^{m_i}_+\mbox{-}\argmin \{(x_{-i}, -x_{-i}, f_i(x)) \; | \; x \in\bbx_i \} = C_i\mbox{-}\argmin \{(x,f(x)) \; | \; x \in\bbx_i \},
\]
see \cite[Lemma~2.3.4]{SNT85} for pointed polyhedral convex cones though this can be extended to any polyhedral convex cone (by noting that $C_i = \{x \; | \; Z_i^\T x \geq 0\}$ and therefore $\hat y - y \in C_i$ if and only if $Z_i^\T(\hat y - y) \geq 0$).  
Note that both of the cones~$C_i$ in Example~\ref{ex:generalized} and~\ref{ex:generalized-2} have positive dual cones with $m_i=5$ generating vectors. Hence, in this case, this transformation turns a linear vector optimization problem with a $6$-dimensional image space into a standard multi-objective linear program with a $5$-dimensional image space (further dimension reduction is discussed in~\cite{HFR22}), which can be solved e.g.\ with \cite{Armand93,VanTu17,tohidi2018adjacency}. 
Note that for the results of this paper the whole set of minimizers, i.e.\ the set of all Pareto optimal points of the corresponding vector optimization problems/multi-objective linear programs, is needed.  This can be seen clearly in Example~\ref{ex:generalized-2} as there are entire line segments that are Nash equilibria. Algorithms such as \cite{steuer73,HLR14,RUV17} provide only the set of all efficient extreme points of the problems considered here (though this is only true for \cite{HLR14,RUV17} because of the particular structure of the objective function used here). 
In contrast, the algorithms of \cite{Armand93,VanTu17,tohidi2018adjacency} 
provide the set of all Pareto optimal points, i.e., also the efficient faces.
\end{remark}

\subsection{Relation to shared constraint games}\label{sec:genNash-shared}
We continue our study of generalized Nash games (see Definition~\ref{defn:nash-generalized}) by finding meaningful shared constraint games that produce a sandwich principle on the set of Nash equilibria.  
This is in contrast to the characterization of the set of Nash equilibria from Corollary~\ref{cor:intersec_gen} in which a set of $N$ vector optimization problems are considered.
Though it may seem counterintuitive to consider such a game when Corollary~\ref{cor:intersec_gen} already provides a procedure for determining the set of Nash equilibria for the generalized game, we refer the interested reader to~\cite{braouezec2021economic}.  For instance, consider~\cite[Section 3.1]{braouezec2021economic} in which nations participate in a game to determine their pollution levels (e.g., carbon emissions) subject to individual constraints on desired industrial production. Assuming linear objectives, this generalized game can be written as a linear game and thus solved directly using the methodology proposed in Remark~\ref{Rem:linVOP}. However, as proven in Proposition 2 of~\cite{braouezec2021economic}, this generalized game may fail to admit any Nash equilibria at all.  But enforcing a shared constraint for all nations (e.g., through treaty obligations) can guarantee the existence of a Nash equilibrium and, as proven below in Theorem~\ref{thm:nash-generalized} and, independently, in Proposition 1 of~\cite{braouezec2021economic}, will always admit any generalized equilibria as a solution as well. 

Motivated thusly, we wish to consider shared constraint games associated with the generalized Nash game $(f,(\bbx_i)_{i = 1}^N)$. As expected from the pollution game of~\cite[Section 3.1]{braouezec2021economic}, the lack of a shared feasibility constraint in~\eqref{eq:Nashii} makes it impossible to find an appropriate (meaningful) set $\bbx$ that would provide an equivalence between the solutions of the generalized Nash game~\eqref{eq:Nashii} and the \emph{single} vector optimization problem~\eqref{eq:vop}. 
Instead, for generalized games, we find that an appropriate choice of the constraint set $\bbx$ can lead to sufficient, another choice of $\bbx$ to necessary conditions for being a generalized Nash equilibrium; this is proven in Theorem~\ref{thm:nash-generalized} below. Thus, by solving an appropriate vector optimization problem, one can obtain a superset of the set of all generalized Nash equilibrium. Whereas by solving a different appropriate vector optimization problem, one can obtain a subset of the set of all generalized Nash equilibrium. This will be made precise in the following theorem, which is our third main result of this paper.

\begin{theorem}\label{thm:nash-generalized}
Consider the generalized non-cooperative game \eqref{eq:game-generalized}.  
\begin{enumerate}
\item Let $\overline\bbx \subseteq \bigcap_{i = 1}^N \bbx_i$.  The strategy $x^* \in \overline\bbx$ is a Nash equilibrium only if it is a minimizer of~\eqref{eq:vop} with constraint set $\overline\bbx$, i.e., 
\begin{align}\label{superset}
\NE(f,(\bbx_i)_{i = 1}^N) \cap \Big\{\overline\bbx\Big\} \subseteq C\mbox{-}\argmin \{(x,f(x)) \; | \; x \in \overline\bbx\}, 
\end{align}
with ordering cone $C$ defined as in~\eqref{eq:C}. 
In particular, if we set $\overline\bbx = \bigcap_{i = 1}^N \bbx_i$, then 
 every Nash equilibrium is a minimizer of~\eqref{eq:vop} with constraint set $\overline\bbx$, i.e.,
\begin{align}\label{superset2}
\NE(f,(\bbx_i)_{i = 1}^N) \subseteq C\mbox{-}\argmin \{(x,f(x)) \; | \; x \in\bigcap_{i = 1}^N \bbx_i\}.
\end{align}
\item Let $\underline\bbx \supseteq \bigcup_{i = 1}^N \bbx_i$.  The strategy $x^*$ is a Nash equilibrium if it is feasible for the generalized game and a minimizer of~\eqref{eq:vop} with constraint set $\underline\bbx$ and ordering cone $C$ as defined in~\eqref{eq:C}, i.e., 
\begin{align}\label{subset}
\NE(f,(\bbx_i)_{i = 1}^N) \supseteq \Big\{C\mbox{-}\argmin \{(x,f(x)) \; | \; x \in \underline\bbx\}\Big\}\cap \Big\{\bigcap_{i = 1}^N \bbx_i\Big\}.
\end{align}
\end{enumerate}
\end{theorem}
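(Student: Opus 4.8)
The theorem gives two one-sided bounds for generalized Nash games. Part 1 says equilibria are contained in the Pareto set when we shrink the constraint to the intersection; Part 2 says Pareto points that are feasible are equilibria when we enlarge the constraint to contain the union. Both should reduce to the logic of Theorem~\ref{thm:nash-shared}, but with the shared-constraint equivalence broken into its two directions, each exploiting a containment relation between $\overline\bbx$ (resp. $\underline\bbx$) and the player-specific sets $\bbx_i$.

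Let me think about why shrinking gives necessity and enlarging gives sufficiency.

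**Part 1 (necessity / superset).** Let $x^*\in\NE(f,(\bbx_i))$ with $x^*\in\overline\bbx\subseteq\bigcap_i\bbx_i$. I want $x^*\in C\text{-}\argmin\{(x,f(x))\mid x\in\overline\bbx\}$. By Remark~\ref{rem:minimizers}, I must show: for any $x\in\overline\bbx$ with $(x,f(x))\leq_C(x^*,f(x^*))$, also $(x,f(x))\geq_C(x^*,f(x^*))$. The hypothesis $(x,f(x))\leq_C(x^*,f(x^*))$ means $\exists i$ with $x_{-i}=x^*_{-i}$ and $f_i(x)\leq f_i(x^*)$. Now since $x\in\overline\bbx\subseteq\bbx_i$, the point $x=(x_i,x^*_{-i})$ is a feasible deviation for player $i$ in the generalized game, so the equilibrium condition~\eqref{eq:Nashii} forces $f_i(x)\geq f_i(x^*)$. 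Hence $f_i(x)=f_i(x^*)$, giving $(x,f(x))\geq_{C_i}(x^*,f(x^*))$ and thus $\geq_C$. This mirrors the first half of the proof of Theorem~\ref{thm:nash-shared} verbatim; the only subtlety is that feasibility of the deviation $x$ for player $i$ is guaranteed precisely by $\overline\bbx\subseteq\bbx_i$. The statement~\eqref{superset2} is the special case $\overline\bbx=\bigcap_i\bbx_i$.

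**Part 2 (sufficiency / subset).** Let $x^*\in C\text{-}\argmin\{(x,f(x))\mid x\in\underline\bbx\}$ with $x^*\in\bigcap_i\bbx_i$ (so $x^*$ is feasible for every player). I want $x^*\in\NE$. Fix a player $i$ and a feasible deviation $x_i\in\xcal_i$ with $(x_i,x^*_{-i})\in\bbx_i$; set $x=(x_i,x^*_{-i})$. I must show $f_i(x)\geq f_i(x^*)$. The key is that $x\in\bbx_i\subseteq\bigcup_j\bbx_j\subseteq\underline\bbx$, so $x$ is a feasible point of the enlarged vector optimization problem. Now suppose for contradiction $f_i(x)<f_i(x^*)$. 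Since $x_{-i}=x^*_{-i}$, this gives $(x,f(x))\leq_{C_i}(x^*,f(x^*))$, i.e. $(x,f(x))\leq_C(x^*,f(x^*))$ with strictness. By minimality of $x^*$ we would need $(x,f(x))\geq_C(x^*,f(x^*))$; tracing through the cone structure as in Lemma~\ref{lemma:decomposition} this forces $f_i(x)\geq f_i(x^*)$, contradicting strict inequality. Hence $f_i(x)\geq f_i(x^*)$ for all feasible deviations, which is exactly~\eqref{eq:Nashii}.

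**Main obstacle and where to be careful.** The genuinely delicate point is the direction in which the containment must run and, correspondingly, why each bound is only one-sided. In Part 1 I need every candidate deviation appearing in the $C$-comparison to be a \emph{legitimate} deviation for the relevant player, which requires $\overline\bbx\subseteq\bbx_i$ for each $i$ — hence $\overline\bbx\subseteq\bigcap_i\bbx_i$. In Part 2 I need every \emph{legitimate} deviation $(x_i,x^*_{-i})\in\bbx_i$ to be available as a competitor inside the vector problem, which requires $\bbx_i\subseteq\underline\bbx$ for each $i$ — hence $\bigcup_i\bbx_i\subseteq\underline\bbx$. The reason the two bounds do not collapse into an equivalence is that with distinct $\bbx_i$ there is no single $\bbx$ simultaneously contained in and containing all the $\bbx_i$, so the minimality comparison in the vector problem either over-restricts or under-restricts the set of admissible deviations. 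A secondary care point is the handling of the $\geq_C$ conclusion: because $C=\bigcup_j C_j$ is non-convex, the witnessing index $j$ in $(x,f(x))\geq_{C_j}(x^*,f(x^*))$ may differ from $i$, but the argument from Lemma~\ref{lemma:decomposition} (if $j\neq i$ then $x_{-j}=x^*_{-j}$ combined with $x_{-i}=x^*_{-i}$ pins down enough coordinates to recover $f_i(x)\geq f_i(x^*)$) resolves this exactly as in the shared-constraint case. I would invoke Lemma~\ref{lemma:decomposition} rather than re-derive this step.
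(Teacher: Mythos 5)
Your proposal is correct and follows essentially the same route as the paper: Part 1 is the direct replication of the necessity half of Theorem~\ref{thm:nash-shared}, using $\overline\bbx\subseteq\bbx_i$ to legitimize the deviations, and Part 2 uses Lemma~\ref{lemma:decomposition} together with $\bbx_i\subseteq\underline\bbx$ exactly as the paper does (your contradiction framing versus the paper's direct statement is a cosmetic difference). Your closing discussion of why the containments must run in opposite directions is a nice articulation of the point the paper makes only implicitly.
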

\proof{Proof}
Recall from Definition~\ref{defn:pareto} that $x^* \in C\mbox{-}\argmin \{(x,f(x)) \; | \; x \in \bbx\}$ if and only if for any $x \in \bbx$ such that $(x,f(x)) \leq_C (x^*,f(x^*))$ then $(x,f(x)) \geq_C (x^*,f(x^*))$.
\begin{enumerate}
\item Consider $x^* \in \overline\bbx$ to be a Nash equilibrium of the game.  By definition of the Nash equilibrium, for any player $i$, if $x \in \bbx_i$ with $x_{-i} = x_{-i}^*$ then $f_i(x) \geq f_i(x^*)$. In particular, this is true for $x \in \overline\bbx \subseteq \bbx_i$ as well.  Therefore if $(x,f(x)) \leq_C (x^*,f(x^*))$ then $x_{-i} = x_{-i}^*$ for some $i$ which from a Nash equilibrium we know $f_i(x) \geq f_i(x^*)$, i.e., $(x,f(x)) \geq_C (x^*,f(x^*))$.

Equation~\eqref{superset2} trivially follows as $\NE(f,(\bbx_i)_{i = 1}^N) \subseteq \bigcap_{i = 1}^N \bbx_i$.

\item Consider $x^*$ to be a minimizer of the vector optimization problem with constraints $\underline\bbx$ that is feasible for the original game (i.e., $x^* \in \bigcap_{i = 1}^N \bbx_i$).  By definition of the Pareto minimizers and Lemma~\ref{lemma:decomposition}, for any player $i$, if $x \in \underline\bbx$ with $x_{-i} = x_{-i}^*$ then $f_i(x) \geq f_i(x^*)$. In particular, this is true for $x \in \bbx_i \subseteq \underline\bbx$ as well.  This immediately implies $x^*$ is a Nash equilibrium of the game.
\end{enumerate}
\endproof

\begin{remark}\label{rem:nash-generalized}
Within the context of Theorem~\ref{thm:nash-generalized}, the most useful sets to consider are $\overline\bbx = \bigcap_{i = 1}^N \bbx_i$ and $\underline\bbx = \cl\co\bigcup_{i = 1}^N \bbx_i$ (i.e., the closed and convex hull of $\bigcup_{i = 1}^N \bbx_i$).
In particular, if we additionally assume that $\bbx_i$ is closed and convex for every $i$,
 then the choice of $\overline\bbx = \bigcap_{i = 1}^N \bbx_i$ in~\eqref{superset} leads  on the one hand to the largest possible set of Nash equilibria in~\eqref{superset} (as $\NE(f,(\bbx_i)_{i = 1}^N) \subseteq \bigcap_{i = 1}^N \bbx_i$ ensures $\NE(f,(\bbx_i)_{i = 1}^N) \cap \{\overline\bbx\} =\NE(f,(\bbx_i)_{i = 1}^N)$ for this choice of $\overline\bbx$), and on the other hand to the smallest possible superset in~\eqref{superset} amongst all closed and convex constraint sets.
Similarly, and again assuming that $\bbx_i$ is closed and convex for every $i$,
 the choice of $\underline\bbx = \cl\co\bigcup_{i = 1}^N \bbx_i$ in~\eqref{subset} leads to the largest possible subset in~\eqref{subset} amongst all closed and convex constraint sets.
 
We will occasionally refer to $C\mbox{-}\argmin \{(x,f(x)) \; | \; x \in\bigcap_{i = 1}^N \bbx_i\}$ appearing in~\eqref{superset2} as the ``intersection game'', and to $C\mbox{-}\argmin \{(x,f(x)) \; | \; x \in  \cl\co\bigcup_{i = 1}^N \bbx_i\}$ appearing in~\eqref{subset} when setting $\underline\bbx = \cl\co\bigcup_{i = 1}^N \bbx_i$ as the  ``union game'' of the generalized Nash game \eqref{eq:game-generalized}. The names are justified as the  ``intersection game'' and the   ``union game'' are both shared constraint Nash games by Theorem~\ref{thm:nash-shared}. The ``intersection game'' as a necessary condition for the generalized Nash equilibrium encoded in~\eqref{superset2} is independently presented in a purely game theoretic setting (i.e.\ without the connection to vector optimization) in Proposition 1 of~\cite{braouezec2021economic}.
\end{remark}

By Lemma~\ref{lemma:decomposition}, the following corollary is immediate.
\begin{corollary}\label{Cor:Nashii} 
It holds
\begin{align}\label{superseti}
	\NE(f,(\bbx_i)_{i = 1}^N)\subseteq \bigcap_{i = 1}^N C_i\mbox{-}\argmin \{(x,f(x)) \; | \; x \in \bigcap_{i = 1}^N \bbx_i\}.
\end{align}
Furthermore, for any choice of $\underline\bbx \supseteq \bigcup_{i = 1}^N \bbx_i$, it holds
\[
	\NE(f,(\bbx_i)_{i = 1}^N) \supseteq \Big\{\bigcap_{i = 1}^N C_i\mbox{-}\argmin \{(x,f(x)) \; | \; x \in\underline\bbx\} \Big\}\cap \Big\{\bigcap_{i = 1}^N \bbx_i\Big\}.
\]
\end{corollary}

Let us now return to the superset relation~\eqref{superset2} in Theorem~\ref{thm:nash-generalized}, respectively relation~\eqref{superseti} in Corollary~\ref{Cor:Nashii}.  We now want to characterize those Pareto points in the superset that are not generalized Nash equilibria. 
This will  provide an economic interpretation for the discrepancy between the set of generalized Nash equilibria and the set of Pareto optimal points.

Note that, in the generalized Nash game, player $i$ seeks to minimize her cost function $f_i$, given the strategy $x_{-i}^* \in \prod_{j \neq i} \xcal_j$ of the other players in \eqref{eq:game-generalized}. 
Player $i$ takes, however, only her own constraint set $\bbx_i$ into account. So her set of feasible strategies can produce joint strategies $(x_i,x_{-i}^*) \notin \bbx_j$ for some $j\neq i$.  Thus, it can very well happen that 
\begin{align}\label{ego}
\min_{x_i \in \xcal_i} \{f_i(x_i,x_{-i}^*) \; | \; (x_i,x_{-i}^*) \in \bbx_i\}<\min_{x_i \in \xcal_i} \{f_i(x_i,x_{-i}^*) \; | \; (x_i,x_{-i}^*) \in \bigcap_{i = 1}^N \bbx_i\}.
\end{align}
So the setup of a \emph{generalized} Nash game takes on an egocentric point of view. Player~$i$ does not care about the consequences of changing her strategy with respect to feasibility of the others. She cares only about her own feasibility and minimizing her own cost. \emph{So the setup of a generalized Nash game excludes possible equilibria, where a player could decrease her cost only by strategies that are not feasible for other players.} 
These excluded points are indeed equilibria, but of a  shared constraint Nash game that coincides with the generalized Nash game, but considers in her best responds problem only the set of strategies that lead to joint strategies that stay feasible for all. Recall from Theorem~\ref{thm:nash-shared} that the Nash equilibria $\NE(f,\bigcap_{i = 1}^N \bbx_i)$ of this shared constraint game coincide with the set of Pareto optimal points of the vector optimization problem~\eqref{eq:vop} with constraint set $\bbx = \bigcap_{i = 1}^N \bbx_i$ considered in~\eqref{superset2}, i.e.
\[
	\NE(f,\bigcap_{i = 1}^N \bbx_i)=C\mbox{-}\argmin \{(x,f(x)) \; | \; x \in   \bigcap_{i = 1}^N \bbx_i\}.
\]
This explains why the set of Pareto optimal points in~\eqref{superset2} can be larger than the set of generalized Nash equilibria and it provides an economic interpretation for when this happens.

We wish to conclude this section by illustrating the results relating generalized Nash games to shared constraint games. In Example~\ref{ex:generalized-cont} below, we will revisit Example~\ref{ex:generalized} to show that the superset relation~\eqref{superset2} in Theorem~\ref{thm:nash-generalized}, respectively relation~\eqref{superseti} in Corollary~\ref{Cor:Nashii}, can be strict. 

Example~\ref{ex:generalized-cont} and~\ref{ex:generalized-2-cont} also show that the subset relation~\eqref{subset} in Theorem~\ref{thm:nash-generalized} (respectively in Corollary~\ref{Cor:Nashii}) can be strict. It can even lead to an empty set on the right hand side of~\eqref{subset} as demonstrated in Example~\ref{ex:generalized-cont}.
Example~\ref{ex:generalized-2-cont} revisits Example~\ref{ex:generalized-2} and is a modification of Example~\ref{ex:generalized-cont}, where the subset relation~\eqref{subset} is strict, but does not lead to an empty set.

\begin{example}
\label{ex:generalized-cont}
Recall the 
generalized game from Example~\ref{ex:generalized}. 
Let us now consider the ``intersection game'', i.e.\ the vector optimization problem 
$C\mbox{-}\argmin \{(x,f(x)) \; | \; x \in\bigcap_{i = 1}^2 \bbx_i\}$
appearing in~\eqref{superset2} in Theorem~\ref{thm:nash-generalized}, where the cone $C\subseteq\bbr^6$ is given as in~\eqref{exC_2}. 
The Pareto optimizers of this ``intersection game'' provide a superset of the Nash equilibria of the generalized game by Theorem~\ref{thm:nash-generalized}.
In order to solve the vector optimization problem with non-convex ordering cone $C$, we solve (by Lemma~\ref{lemma:decomposition}) the corresponding
two linear vector optimization problems with convex ordering cones $C_i$ and take an intersection of their Pareto optimal points leading to
$
	P^\cap=\{x_1\} \cup \bigcup_{k = 2}^5 P_k,
$ where $P_2,P_3$ are as in Example~\ref{ex:generalized-2} and 
$P_4 = \co\{x^3 \, , \, x^6\}$, 
$P_5 = \co\{x^4 \, , \, x^6\}$ 
for extremal points $x^1,...,x^4$ as in Example~\ref{ex:generalized}, $x^5$ as in Example~\ref{ex:generalized-2}, and
$x^6 = ((1.175,1.9125) \, , \, (1.5,0))$.
Condition~\eqref{ego} characterizes exactly those points in the superset $P^\cap$ that are not generalized Nash equilibria. Removing those points from $P^\cap$ leads to the set of generalized Nash equilibria of this game as already computed in Example~\ref{ex:generalized}.

For completeness, let us now also consider the ``union game'', i.e.\ the vector optimization problem $C\mbox{-}\argmin \{(x,f(x)) \; | \; x \in \underline\bbx\}$ appearing in the subset relation~\eqref{subset} in Theorem~\ref{thm:nash-generalized}  for $\underline\bbx = \cl\co\bigcup_{i = 1}^2 \bbx_i$. While the ``union game'' has Pareto optimal points consisting of 2 efficient faces (a plane and a line) including $4$ extremal points, the intersection with the feasibility condition $\bigcap_{i = 1}^2 \bbx_i$ in~\eqref{subset}, leads to an empty set on the right hand side of~\eqref{subset} in this example. We wish to highlight that, though the ``union game'' is a convex, shared constraint game (and thus existence of a Nash equilibrium is guaranteed as found computationally here; see also Theorem 1 of~\cite{rosen65}), the feasibility condition can lead to an empty set of equilibria in the sufficient condition~\eqref{subset}.
\end{example}

\begin{example}
\label{ex:generalized-2-cont}
Consider again the modified $N=2$ player game given in Example~\ref{ex:generalized-2}.
Consider first its related ``union game'', i.e., problem $C\mbox{-}\argmin \{(x,f(x)) \; | \; x \in \underline\bbx\}$ appearing in the subset relation~\eqref{subset} in Theorem~\ref{thm:nash-generalized} for $\underline\bbx = \cl\co\bigcup_{i = 1}^2 \hat\bbx_i$.
This ``union game'' has a single line segment minimizer that is also feasible for the generalized game (i.e., an element of $\bigcap_{i = 1}^2 \hat\bbx_i$) and is given by
$P^\cup = P_2$, 
where $P_2$ is as in Example~\ref{ex:generalized-2}, i.e., $P_2 = \co\{x^2 \, , \, x^5\}$ for
$x^2 = ((0,2) \, , \, (0,6))$ and
$x^5 = ((0,2.5) \, , \, (0.125,5.5))$. 
Thus, in contrast to Example~\ref{ex:generalized-cont}, one obtains already a set of Nash equilibria from the ``union game'' by applying relation~\eqref{subset} in Theorem~\ref{thm:nash-generalized}. Thus, 
$
	P_2\subseteq \NE(f,(\hat\bbx_i)_{i = 1}^N).
$

Note that, as by construction, $\hat\bbx_1 \cap \hat\bbx_2 = \bbx_1 \cap \bbx_2$, the corresponding ``intersection game'' is identical to that of Example~\ref{ex:generalized-cont}. Therefore it has the same set of Pareto optimal points $P^\cap$ 
providing a superset of the generalized Nash equilibria of this game by~\eqref{superseti} in Corollary~\ref{Cor:Nashii}. 
\end{example}

\section{Vector-Valued Nash Games as Vector Optimization}\label{sec:vector}

In this section, we consider (generalized) non-cooperative games in which the $N$ players have vector-valued payoffs.  This extends the settings of Section~\ref{sec:shared} and~\ref{sec:generalized} by allowing for the space of costs to be \emph{partially ordered}.  As formalized below, and as stated in e.g.~\cite{shapley1959,bade2005}, the Nash equilibria in such a setting are w.r.t.\ Pareto efficient costs. The problem of finding any of these Nash equilibria is typically extremely challenging and often requires the consideration of an infinite number of scalarized problems~(see \cite{Corley85,bade2005} as well as Remark~\ref{rem:scalarization} below).  Such problems naturally arise due to incomplete preferences in the costs, e.g., costs in multiple assets with transaction costs between these assets.  

In Section~\ref{sec:vector-Nash}, we will present the primary theoretical results of this section extending, e.g., Corollary~\ref{cor:intersec_gen} to provide Corollary~\ref{cor:nash-vector} which proves that the same characterization of Nash games as Pareto optimal points of a certain vector optimization problem also holds for vector-valued games.  Within this context we also provide a brief introduction to vector-valued Nash games.  In Section~\ref{sec:vector-ex}, we will illustrate these results with a numerical example, which also demonstrates the computational advantages of the Pareto reformulation of vector-valued Nash games.

\subsection{Characterization of Nash equilibria for vector-valued games}\label{sec:vector-Nash}

Within this section, we study vector-valued Nash games, that is, $N$ player non-cooperative games in which the costs are partially ordered.  We will present these results in the context of generalized Nash games as in Section~\ref{sec:generalized}. 

Specifically, each player $i$ has a cost function $f_i: \prod_{j = 1}^N \xcal_j \to \ycal_i$ mapping to some partially ordered space $\ycal_i$ with associated convex ordering cone $K_i \subseteq \ycal_i$.
As in Section~\ref{sec:generalized}, each player $i$ also has a constraint set $\bbx_i \subseteq \prod_{j = 1}^N \xcal_j$ on the joint strategy of all players.  With this setup, the Nash game is one in which, given all other players fix their strategies $x_{-i}^* \in \prod_{j \neq i} \xcal_j$, player $i$ seeks to optimize
\begin{equation}\label{eq:game-vector}
K_i\mbox{-}\vmin_{x_i \in \xcal_i} \left\{f_i(x_i,x_{-i}^*) \; | \; (x_i,x_{-i}^*) \in \bbx_i\right\} \quad \forall i = 1,...,N.
\end{equation}
Within~\eqref{eq:game-vector}, the minimum is in the sense of Pareto optimality (see Definition~\ref{defn:pareto}).  This construction leads to the notion of the Nash equilibrium for vector-valued games as is provided in, e.g.,~\cite{shapley1959,bade2005}. The Nash equilibrium in vector-valued games is also called, e.g., the Shapley equilibrium in \cite{HL18}, the Pareto equilibrium in \cite[Chapter 11]{voorneveld1999potential}, and the Pareto Nash equilibrium in~\cite{JM07}.
\begin{definition}\label{defn:nash-vector}
Consider the generalized vector-valued Nash game with $N$ players described by~\eqref{eq:game-vector}.  
That is, player $i$ has strategies in the space $\xcal_i$, cost function $f_i: \prod_{j = 1}^N \xcal_j \to \ycal_i$ with ordering cone $K_i \subseteq \ycal_i$, and such that the joint strategies must satisfy the feasibility condition $x \in \bbx_i \subseteq \prod_{i = 1}^N \xcal_i$.  
The joint strategy $x^* \in \bigcap_{i = 1}^N \bbx_i$ is called a generalized \textbf{\emph{Nash equilibrium}} for a vector-valued game if, for any player $i$, 
$f_i(x_i,x_{-i}^*) \geq_{K_i} f_i(x^*)$ for any $x_i \in \xcal_i$ such that $(x_i,x_{-i}^*) \in \bbx_i$ and $f_i(x_i,x_{-i}^*) \leq_{K_i} f_i(x^*)$.
The set of all Nash equilibria will be denoted by $\NE(f,(\bbx_i)_{i = 1}^N)$; we simplify the notation to $\NE(f,\bbx)$ if this is a shared constraint game (i.e., if $\bbx = \bbx_i$ for every player $i$ as in Section~\ref{sec:shared}).
\end{definition}
Note again that the joint strategy $x^* \in \bigcap_{i = 1}^N \bbx_i$ is called a Nash equilibrium if, for any player $i$,
\[x_i^* \in K_i\mbox{-}\argmin_{x_i \in \xcal_i} \{f_i(x_i,x_{-i}^*) \; | \; (x_i,x_{-i}^*) \in \bbx_i\}.\]

\begin{remark}\label{rem:scalarization}
Let us shortly review the scalarization techniques used in the literature so far to solve vector-valued Nash games. For that purpose let us for now assume the following. Let $\ycal_i$ be a topological space with dual $\ycal_i^*$ and bilinear form $\langle \cdot,\cdot \rangle: \ycal_i^* \times \ycal_i \to \bbr$.  Let $K_i \subseteq \ycal_i$ be a closed and convex ordering cone.  Further, let $f_i(\cdot,x_{-i}^*)$ be $K_i$-convex for every $x_{-i}^* \in \prod_{j \neq i} \xcal_j$ , i.e., $f_i(\lambda x_i + (1-\lambda)\bar x_i,x_{-i}^*) \leq_{K_i} \lambda f_i(x_i,x_{-i}^*) + (1-\lambda) f_i(\bar x_i,x_{-i}^*)$ for any $\lambda \in [0,1]$ and $x_i,\bar x_i \in \xcal_i$, and let $\bbx_i$ be a convex set.  Then, as discussed in \cite[Chapter 11.2.1]{Jahn11}, $x_i^*$ solves the player $i$ best response function~\eqref{eq:game-vector} if and only if there exists some $w_i \in K_i^+ := \{k_i^* \in \ycal_i^* \; | \; \langle k_i^* , k_i \rangle \geq 0 \; \forall k_i \in K_i\}$ such that
\begin{equation}\label{eq:scalarization}
x_i^* \in \argmin_{x_i \in \xcal_i} \{\langle w_i , f_i(x_i,x_{-i}^*) \rangle \; | \; (x_i,x_{-i}^*) \in \bbx_i\}.
\end{equation}
Equation~\eqref{eq:scalarization} is often called the linear or weighted-sum scalarization of the vector optimization problem~\eqref{eq:game-vector}.
Because of the infinite number of linear scalarizations for a vector optimization problem, the traditional methods for finding Nash equilibria of vector-valued games generally fail. \cite[Section 4]{Corley85} explicitly discusses ``the impossible task of solving all possible scalarizations.''
\end{remark}

\begin{remark}\label{rem:scalar-game}
Note that we recover the typical (scalar-valued) definition for a Nash equilibrium (see Definitions~\ref{defn:nash-shared} and~\ref{defn:nash-generalized}) if $\ycal_i = \bbr$ and $K_i = \bbr_+$ for every player $i$.  This is due to the fact that $\bbr_+\mbox{-}\argmin_{x_i \in \xcal_i} \{f_i(x_i,x_{-i}^*) \; | \; (x_i,x_{-i}^*) \in \bbx_i\}$ provides the usual definition for the minimizers.
\end{remark}

In contrast to the scalarization techniques described in Remark~\ref{rem:scalarization}, we want to relate, similar to the preceding sections for real-valued games, vector-valued Nash games to Pareto optimal points of certain vector optimization problems. We will see that the additional assumptions stated in Remark~\ref{rem:scalarization} are not necessary for that. Example~\ref{ex:vector} below will show that the approach proposed here has clear computational advantages for game with linear objectives and constraints.

Consider again the vector optimization problem~\eqref{eq:vop} but with a modification to the ordering cone $C$ that accounts for the more general spaces of costs $\ycal_i$ with ordering cones $K_i$. Thus, 
for a constraint set $\bbx$, consider the following vector optimization problem and ordering cone
\begin{align}
\label{eq:vop-vector} C&\mbox{-}\vmin \{(x,f(x)) \; | \; x \in \bbx\} \\
\label{eq:C-vector} C &:= \{(x,y) \in \prod_{i = 1}^N \xcal_i \times \prod_{i = 1}^N \ycal_i \; | \; \exists i \in \{1,...,N\}: \; x_{-i} = 0_{\prod_{j \neq i} \xcal_j}, \; y_i \in K_i\}.
\end{align}
Similar to the cone defined in~\eqref{eq:C}, the ordering cone $C$ is \emph{non}-convex but can be decomposed into convex cones $C_i$ such that $C = \bigcup_{i = 1}^N C_i$ by defining
\begin{equation}
\label{eq:Ci-vector}
C_i := \left(\prod_{j = 1}^{i-1} \{0_{\xcal_j}\}\right) \times \xcal_i \times \left(\prod_{j = i+1}^N \{0_{\xcal_j}\}\right) \times \left(\prod_{j = 1}^{i-1} \ycal_j\right) \times K_i \times \left(\prod_{j = i+1}^N \ycal_j\right).
\end{equation}

As in Section~\ref{sec:generalized}, if $\bbx_i \neq \bbx_j$ for two players $i \neq j$ then, for vector-valued games, we can consider either a collection of $N$ vector optimization problems to characterize the set of Nash equilibria or consider two specific vector optimization problems to produce a sandwich principle.
In fact, in the following corollary -- the final main result of this work -- we will prove that the relations between the Nash equilibria and Pareto optimizers presented for real-valued games in Corollary~\ref{cor:intersec_gen} 
hold without modification for vector-valued games.
\begin{corollary}\label{cor:nash-vector}
Consider the generalized vector-valued non-cooperative game~\eqref{eq:game-vector}.  
The strategy $x^*$ is a Nash equilibrium if and only if $x^* \in C_i\mbox{-}\argmin\{(x,f(x)) \; | \; x \in \bbx_i\}$ for every $i = 1,...,N$, i.e.,
\[\NE(f,(\bbx_i)_{i = 1}^N) = \bigcap_{i = 1}^N C_i\mbox{-}\argmin\{(x,f(x)) \; | \; x \in \bbx_i\}.\] 
\end{corollary}

\begin{remark}\label{rem:vector-Nash-shared}
\begin{enumerate}
\item The proof is in total analogy to the proof of Corollary~\ref{cor:intersec_gen} 
by just replacing the cones with the ones defined in~\eqref{eq:Ci-vector}.
\item Under the shared constraint setting (i.e., $\bbx=\bbx_i = \bbx_j$ for every pair of two players $i,j$), 
the set of all Nash equilibria in Corollary~\ref{cor:nash-vector} 
can be reformulated as the single vector optimization problem~\eqref{eq:vop-vector} through an application of Lemma~\ref{lemma:decomposition}. Thus, the obtained result gives an equivalent characterization of the set of all shared constraint Nash equilibria of a vector-valued game as the Pareto optimal points of the vector optimization problem~\eqref{eq:vop-vector} with cone $C$ as given in~\eqref{eq:C-vector}, which is in total analogy to the real-valued case of Theorem~\ref{thm:nash-shared}.  
\item Theorem~\ref{thm:nash-generalized} and Corollary~\ref{Cor:Nashii}  hold in exactly the same formulation also for vector-valued games, only replacing the cones with the ones defined in~\eqref{eq:C-vector} and~\eqref{eq:Ci-vector}.
\end{enumerate}
\end{remark}

\subsection{Examples}\label{sec:vector-ex}
Within this section we will consider a shared constraint vector-valued Nash game to illustrate the results derived in Section~\ref{sec:vector-Nash}, in particular Corollary~\ref{cor:nash-vector} in the shared constraint case.  As with the examples of Section~\ref{sec:generalized}, this example is a 2 player game with linear objectives and constraints.  As such, we are able to utilize the methods for linear vector optimization to construct the set of optimizers as detailed in Remark~\ref{Rem:linVOP}.  This methodology is particularly noteworthy because, as far as the authors are aware, all prior works would require the study of an infinite number of scalarizations, see, e.g.,~\cite{bade2005}.  As noted in~\cite{Corley85}, such an approach is computationally intractable. We refer the interested reader to~\cite{JM07} for a survey of various refinements of the Nash equilibria for vector-valued games which  take ``into account the methodology of the scalarization which adds to the original problem new endogenous parameters.''
As such, as far as the authors are aware, there does not exist any other methodology which computes the set of all Nash equilibria for this vector-valued game.

\begin{example}
\label{ex:vector}
Consider the $N=2$ player vector-valued game with shared constraints 
\begin{align*}
x_1^* &\in \bbr^2_+\mbox{-}\argmax_{x_1 \in \bbr^2}\{(2x_{11},x_{12})^\T \; | \; (x_1,x_2^*) \in \bbx\},
\qquad x_2^* \in \bbr^2_+\mbox{-}\argmax_{x_2 \in \bbr^2}\{(2x_{21},3x_{22})^\T \; | \; (x_1^*,x_2) \in \bbx\},
\end{align*}
in which player 1 plays strategy $x_1 = (x_{11},x_{12}) \in \xcal_1 = \bbr^2$ and player 2 plays strategy $x_2 = (x_{21},x_{22}) \in \xcal_2 = \bbr^2$.  We set the shared constraints $\bbx$ to be the intersection of the constraints from Example~\ref{ex:generalized}, i.e.\ $\bbx = \bbx_1 \cap \bbx_2$ for $\bbx_1$ and $\bbx_2$ as in~\eqref{ex:genX1} and~\eqref{ex:genX2}.
Thus we consider a vector-valued game~\eqref{eq:vop-vector} with linear objectives $f_1(x) = -(2x_{11} \; , \; x_{12})^\T$, $f_2(x) = -(2x_{21} \; , \; 3x_{22})^\T$ both ordered by the natural ordering cone, i.e. the positive orthant $K_i = \bbr^2_+$ for $i=1,2$.

Note that the scalarization of this game with fixed weights $w_1 = (1,1)^\T$ and $w_2 = (1,1)^\T$ coincides with the ``intersection game'' of Examples~\ref{ex:generalized-cont} and~\ref{ex:generalized-2-cont} (see Remark~\ref{rem:scalarization}).  Therefore, $\{x^1\}$ and the sets $P_2,...,P_5$ (described in Examples~\ref{ex:generalized},~\ref{ex:generalized-2}, and~\ref{ex:generalized-cont}) must consist of Nash equilibria of this vector-valued game as well. However, as will be found below, these are not the only Nash equilibria for this game as some equilibria correspond to different weighted-sum scalarizations.
By applying Corollary~\ref{cor:nash-vector} under the shared constraint setting (as discussed in Remark~\ref{rem:vector-Nash-shared}), we will solve~\eqref{eq:vop-vector} with respect to the non-convex ordering cone 
\[C=(\bbr^2\times \{(0,0)\}\times \bbr^2_+\times \bbr^2)\cup(\{(0,0)\}\times \bbr^2\times \bbr^2\times \bbr^2_+)=C_1\cup C_2 \subseteq \bbr^8.\]
We will follow the same algorithmic approach as considered in Remark~\ref{Rem:linVOP} with updated generators $Z_1,Z_2$ for the positive dual cones of $C_1,C_2$ respectively; in this example there are $m_1 = m_2 = 6$ generating vectors for both of these positive dual cones. Thus, instead of considering two vector optimization problems with $8$-dimensional image spaces, one can equivalently solve two multi-objective linear programs with $6$-dimensional image spaces (with further dimension reduction possible as in~\cite{HFR22}) using e.g.\ \cite{VanTu17}.

The set of Nash equilibria using Corollary~\ref{cor:nash-vector}, is given by the union 
\[
	\NE(f,\bbx) := \NE_1 \cup \NE_2 \cup \NE_3 \cup \NE_4
\]
of the following $4$ convex polyhedrons $\NE_1 = \co\{x^1 \, , \, x^3 \, , \, x^4 \, , \, x^8\}$, $\NE_2 = \co\{x^2 \, , \, x^3 \, , \, x^5 \, , \, x^8\}$, 
$\NE_3 = \co\{x^3 \, , \, x^5 \, , \, x^6 \, , \, x^7\}$, and 
$\NE_4 = \co\{x^3 \, , \, x^4 \, , \, x^6\}$
for extremal points $x^1,...,x^6$ as given in the scalar games of Examples~\ref{ex:generalized},~\ref{ex:generalized-2}, and~\ref{ex:generalized-cont} and $x^7 = ((0,2.5) \, , \, (1.5,0))$, $x^8 = \Big(\Big(\frac{8}{55},\frac{78}{55}\Big) \, , \, (0,6)\Big)$.
As $x^1,...,x^6$ coincide exactly with extremal points found in scalar games of Examples~\ref{ex:generalized},~\ref{ex:generalized-2}, and~\ref{ex:generalized-cont}, all those solutions are equilibria w.r.t.\ the scalarizations $w_1 = w_2 = (1,1)^\T$.  The other two extremal solutions are new to this vector-valued game: $x^7$ is an equilibrium w.r.t.\ the scalarization $w_1 = (0,1)^\T$ and $w_2 = (1,0)^\T$; $x^8$ is an equilibrium w.r.t.\ the scalarization $w_1 = (2,1)^\T$ and $w_2 = (1,1)^\T$.
Note that even though, e.g., $\NE_4$ has extremal points all with scalarizations $w_1 = w_2 = (1,1)^\T$, not every point within that polyhedron $\NE_4$ is a Nash equilibrium w.r.t.\ those scalarizations.  This can be seen through a comparison of the above representation for the set of equilibria to that given for the ``intersection game'' in Examples~\ref{ex:generalized-cont} and~\ref{ex:generalized-2-cont}; that is, $\NE_4 \backslash P^\cap \neq \emptyset$.  This demonstrates a clear computational advantage to considering the vector optimization formulation presented herein over the more traditional scalarization approach because the convex combination of solutions for a fixed scalarization do not necessarily result in another Nash equilibrium with those weights.
\end{example}

\section*{Acknowledgment}
We are extremely grateful to Ta Van Tu, who computed the efficient faces in Examples~\ref{ex:generalized-cont},~\ref{ex:generalized-2-cont} and~\ref{ex:vector} with his implementation based on \cite{VanTu17}. 
We would also like to thank Andreas L\"ohne for a helpful discussion about non-convex ordering cones in vector optimization.
We would like to thank Benjamin Wei{\ss}ing for providing insights into the computation of the set of all efficient extreme points of a linear vector optimization problem with a convex ordering cone containing lines and having an empty interior in Bensolve (\cite{bensolve}) via polyhedral projections, which provides an alternative to the approach used and described  here in Remark~\ref{Rem:linVOP} of turning a linear vector optimization problem into a multi-objective linear program with a natural ordering cone.
We would like to thank Niklas Hey for the numerical implementation of Examples~\ref{ex:generalized} and~\ref{ex:generalized-2} based on a small correction of the algorithm proposed in \cite{tohidi2018adjacency}.

\bibliographystyle{plain}
\bibliography{biblio-Nash}

\end{document}